\documentclass[11pt,xcolor=pdftex,a4paper,oneside,centertags,reqno,endnotes]{amsart}

\usepackage{endnotes}
\usepackage{latexsym}
\usepackage{amsmath}
\usepackage{amsfonts}
\usepackage{amssymb}   
\usepackage{epsfig}
\usepackage{amscd}
\usepackage{amsthm}
\usepackage{enumerate}
\usepackage{eucal}
\usepackage[utf8x]{inputenc}
\usepackage{graphics}



\usepackage[normalem]{ulem}

\newcommand\rmi{\hbox{\rm (i)}}
\newcommand\rmii{\hbox{\rm (ii)}}
\newcommand\rmiii{\hbox{\rm (iii)}}

\usepackage{pdfsync}
\usepackage{color}
\usepackage{mathrsfs}


\usepackage[margin=1.3in]{geometry}

\hyphenation{apply}

\usepackage[T1]{fontenc}
\usepackage{mathrsfs}
\usepackage{epsfig}
\usepackage{xypic}
\usepackage{verbatim}
\usepackage{multicol}
\usepackage{wrapfig}
\usepackage{pgf,tikz}
\usepackage{mathrsfs}
\usetikzlibrary{arrows}

\numberwithin{equation}{section}

\newtheorem{theorem}{Theorem}[section]
\newtheorem{proposition}[theorem]{Proposition}
\newtheorem{lemma}[theorem]{Lemma}
\newtheorem{corollary}[theorem]{Corollary}
\theoremstyle{definition}
\newtheorem{definition}[theorem]{Definition}
\newtheorem{remark}[theorem]{Remark}
\newtheorem{notation}[theorem]{Notation}
\newtheorem*{notation*}{Notation}

\renewcommand{\Re}{\mathrm{Re}}

\newcommand{\divv}{\mathrm{div}}

\newcommand{\Dom}{\mathrm{Dom}}

\newcommand{\sign}{\mathrm{sign}}

\newcommand{\N}{\mathbb{N}}
\newcommand{\Z}{\mathbb{Z}}
\newcommand{\R}{\mathbb{R}}
\newcommand{\C}{\mathbb{C}}
\newcommand{\mc}[1]{\mathcal{#1}}
\newcommand{\naH}{\nabla_\mathcal{H}}

\usepackage{textcomp}
\usepackage{dsfont}
\usepackage{latexsym}
\usepackage{amssymb}
\usepackage{amsthm}
\usepackage{amsmath}
\DeclareMathAlphabet{\mathpzc}{OT1}{pzc}{m}{en}
\usepackage{yfonts}
\usepackage{xfrac}
\usepackage{fge}

\usepackage{fancyhdr}

\usepackage{newlfont}
\usepackage{graphicx}

\usepackage{mathtools}
\usepackage{comment}
\usepackage{indentfirst}
\usepackage{braket}

\DeclareMathOperator{\supp}{supp}

\DeclarePairedDelimiter{\abs}{\lvert}{\rvert}

\DeclarePairedDelimiter{\norm}{\lVert}{\rVert}

{\left\lbrace\begin{array}{@{}l@{}}}%
{\end{array}\right.}

\def\Supp#1{\supp\left( #1\right) }

\makeatletter
\newcommand*{\mint}[1]{%
	\mint@l{#1}{}%
}
\newcommand*{\mint@l}[2]{%
	\@ifnextchar\limits{%
		\mint@l{#1}%
	}{%
	\@ifnextchar\nolimits{%
		\mint@l{#1}%
	}{%
	\@ifnextchar\displaylimits{%
		\mint@l{#1}%
	}{%
	\mint@s{#2}{#1}%
}%
}%
}%
}
\newcommand*{\mint@s}[2]{%
	\@ifnextchar_{%
		\mint@sub{#1}{#2}%
	}{%
	\@ifnextchar^{%
		\mint@sup{#1}{#2}%
	}{%
	\mint@{#1}{#2}{}{}%
}%
}%
}
\def\mint@sub#1#2_#3{%
	\@ifnextchar^{%
		\mint@sub@sup{#1}{#2}{#3}%
	}{%
	\mint@{#1}{#2}{#3}{}%
}%
}
\def\mint@sup#1#2^#3{%
	\@ifnextchar_{%
		\mint@sub@sup{#1}{#2}{#3}%
	}{%
	\mint@{#1}{#2}{}{#3}%
}%
}
\def\mint@sub@sup#1#2#3^#4{%
	\mint@{#1}{#2}{#3}{#4}%
}
\def\mint@sup@sub#1#2#3_#4{%
	\mint@{#1}{#2}{#4}{#3}%
}
\newcommand*{\mint@}[4]{%
	\mathop{}%
	\mkern-\thinmuskip
	\mathchoice{%
		\mint@@{#1}{#2}{#3}{#4}%
		\displaystyle\textstyle\scriptstyle
	}{%
	\mint@@{#1}{#2}{#3}{#4}%
	\textstyle\scriptstyle\scriptstyle
}{%
\mint@@{#1}{#2}{#3}{#4}%
\scriptstyle\scriptscriptstyle\scriptscriptstyle
}{%
\mint@@{#1}{#2}{#3}{#4}%
\scriptscriptstyle\scriptscriptstyle\scriptscriptstyle
}%
\mkern-\thinmuskip
\int#1%
\ifx\\#3\\\else_{#3}\fi
\ifx\\#4\\\else^{#4}\fi  
}
\newcommand*{\mint@@}[7]{%
	\begingroup
	\sbox0{$#5\int\m@th$}%
	\sbox2{$#5\int_{}\m@th$}%
	\dimen2=\wd0 %
	\let\mint@limits=#1\relax
	\ifx\mint@limits\relax
	\sbox4{$#5\int_{\kern1sp}^{\kern1sp}\m@th$}%
	\ifdim\wd4>\wd2 %
	\let\mint@limits=\nolimits
	\else
	\let\mint@limits=\limits
	\fi
	\fi
	\ifx\mint@limits\displaylimits
	\ifx#5\displaystyle
	\let\mint@limits=\limits
	\fi
	\fi
	\ifx\mint@limits\limits
	\sbox0{$#7#3\m@th$}%
	\sbox2{$#7#4\m@th$}%
	\ifdim\wd0>\dimen2 %
	\dimen2=\wd0 %
	\fi
	\ifdim\wd2>\dimen2 %
	\dimen2=\wd2 %
	\fi
	\fi
	\rlap{%
		$#5%
		\vcenter{%
			\hbox to\dimen2{%
				\hss
				$#6{#2}\m@th$%
				\hss
			}%
		}%
		$%
	}%
	\endgroup
}

\makeatletter
\def\overbracket#1{\mathop{\vbox{\ialign{##\crcr\noalign{\kern3\p@}
				\downbracketfill\crcr\noalign{\kern3\p@\nointerlineskip}
				$\hfil\displaystyle{#1}\hfil$\crcr}}}\limits}
\def\underbracket#1{\mathop{\vtop{\ialign{##\crcr
				$\hfil\displaystyle{#1}\hfil$\crcr\noalign{\kern3\p@\nointerlineskip}
				\upbracketfill\crcr\noalign{\kern3\p@}}}}\limits}
\def\overparenthesis#1{\mathop{\vbox{\ialign{##\crcr\noalign{\kern3\p@}
				\downparenthfill\crcr\noalign{\kern3\p@\nointerlineskip}
				$\hfil\displaystyle{#1}\hfil$\crcr}}}\limits}
\def\underparenthesis#1{\mathop{\vtop{\ialign{##\crcr
				$\hfil\displaystyle{#1}\hfil$\crcr\noalign{\kern3\p@\nointerlineskip}
				\upparenthfill\crcr\noalign{\kern3\p@}}}}\limits}
\def\downparenthfill{$\m@th\braceld\leaders\vrule\hfill\bracerd$}
\def\upparenthfill{$\m@th\bracelu\leaders\vrule\hfill\braceru$}
\def\upbracketfill{$\m@th\makesm@sh{\llap{\vrule\@height3\p@\@width.7\p@}}%
	\leaders\vrule\@height.7\p@\hfill
	\makesm@sh{\rlap{\vrule\@height3\p@\@width.7\p@}}$}
\def\downbracketfill{$\m@th
	\makesm@sh{\llap{\vrule\@height.7\p@\@depth2.3\p@\@width.7\p@}}%
	\leaders\vrule\@height.7\p@\hfill
	\makesm@sh{\rlap{\vrule\@height.7\p@\@depth2.3\p@\@width.7\p@}}$}
\makeatother

\newcommand{\loc}{\mathrm{loc}}

\newcommand{\dd}{d}

\newcommand{\Hc}{\mathcal{H}}








\begin{document}
\title[Weighted sub-Laplacians on Métivier Groups]{Weighted sub-Laplacians on Métivier Groups: Essential Self-Adjointness and Spectrum}
\author[Bruno]{Tommaso Bruno}
\author[Calzi]{Mattia Calzi}

\address{Tommaso Bruno \\Università degli Studi di Genova\\ Dipartimento di Matematica\\ Via Dodecaneso\\ 35 16146 Genova\\ Italy }
\email{brunot@dima.unige.it}

\address{Mattia Calzi \\Scuola Normale Superiore\\ Piazza dei Cavalieri \\ 7 56126 Pisa\\ Italy }
\email{mattia.calzi@sns.it}

\maketitle
\begin{small}
	\section*{Abstract}
	Let $G$ be a Métivier group and let $N$ be any homogeneous norm on $G$. For $\alpha>0$ denote by $w_\alpha$ the function $ e^{-N^\alpha}$ and consider the weighted sub-Laplacian $\mathcal{L}^{w_\alpha}$ associated with the Dirichlet form $\phi \mapsto \int_{G} \abs{\nabla_\mathcal{H}\phi(y)} ^2 w_\alpha(y)\, \dd y$, where $\nabla_\mathcal{H}$ is the horizontal gradient on $G$. Consider $\mathcal{L}^{w_\alpha}$ with domain $C_c^\infty$. We prove that $\mathcal{L}^{w_\alpha}$ is essentially self-adjoint when $\alpha \geq 1$. For a particular $N$, which is the norm appearing in $\mathcal{L}$'s fundamental solution when $G$ is an H-type group, we prove that $\mathcal{L}^{w_\alpha}$ has purely discrete spectrum if and only if $\alpha>2$, thus proving a conjecture of J.\ Inglis.
\end{small}

\section{Introduction and preliminary results}\label{sec:intr}
Let $G$ be a stratified group. Denote by $dy$ its left- and right-invariant Haar measure and by $\nabla_\mathcal{H}$ the horizontal gradient on $G$. Given a positive measurable function $w$ on $G$, denote by $L^2(G, w) \eqqcolon L^2(w)$ the space of all functions on $G$ that are square integrable with respect to the measure $w(y) \, \dd y$, with consequent inner product. We define the weighted sub-Laplacian $\mathcal{L}^w$ as the operator associated with the Dirichlet form
\[\phi \mapsto \int_G \abs{\nabla_\mathcal{H} \phi(y)}^2 w(y)\, \dd y\]
for every $\phi \in C_c^\infty$. Under very weak assumptions on $w$, such form is symmetric, densely defined, accretive and continuous; thus the operator $\mathcal{L}^w$, with domain $C_c^\infty$, has self-adjoint extensions. We consider in particular the family of weights
\[(w_\alpha = e^{-N^\alpha })_{\alpha>0}\]
where $N$ is any homogeneous norm on $G$. When $G$ is some $\R^n$, $\alpha=2$ and the homogeneous norm is the Euclidean norm on $\R^n$, this operator is indeed -- up to normalization constants -- the classical 
Ornstein-Uhlenbeck operator; therefore $\mathcal{L}^{w_2}$, for some choice on $N$, can be considered as one of its possible sub-Riemannian versions on stratified groups. $\mathcal{L}^{w_\alpha}$ for $\alpha>0$ is then its natural generalization.

The aim of this paper is to investigate the essential self-adjointness of $\mathcal{L}^{w_\alpha}$ and, in the particular case when $G$ is a Métivier group, whether its self-adjoint extensions have purely discrete spectra. We recall that a self-adjoint operator $T$ on a Hilbert space has purely discrete spectrum if its spectrum $\sigma(T)$ is discrete, and for each $\lambda \in \sigma(T)$ the range of the spectral projection $E(\{\lambda\})$ is finite-dimensional (cf.~\cite{ReedSimonI}). When $G$ is an H-type group (a subclass of M\'etivier groups) 
and $N$ is either the Carnot-Charath\'eodory norm or the Kaplan 
norm, the discretness of the spectrum of $\mathcal{L}^{w_\alpha}$ 
has been investigated by J.~Inglis \cite{Inglis}. By using super 
Poincar\'e inequalities he proved that when $N$ is the Carnot-Charath\'eodory norm $\mathcal{L}^{w_\alpha}$ has discrete 
spectrum for  every $\alpha>1$. When $N$ is the Kaplan norm he 
proved that the spectrum of $\mathcal{L}^{w_\alpha}$ is non-discrete for $1<\alpha<2$. Even though his technique, based on functional inequalities, did not allow him to investigate the spectrum 
of $\mathcal{L}^{w_\alpha}$ for the Kaplan norm when $\alpha\ge 2$, Inglis conjectured 
that  the spectrum 
is discrete when $\alpha>2$.
 By using different techniques, we completely characterize the 
discreteness of the spectrum of $\mathcal{L}^{w_\alpha}$ for every 
$\alpha>0$ in the case of the Kaplan norm, not only solving Inglis's 
conjecture, but also giving a direct proof of the non-discreteness of 
the spectrum when $0<\alpha \leq 2$, thus improving Inglis's result.

The content of this paper is as follows. In the remaining of this section we fix the notation and recall the basic features of analysis on stratified groups. In Section~\ref{sec:weighted:schr} we prove a general criterion (Theorem~\ref{teo}) for establishing existence and uniqueness of self-adjoint extensions of $\mathcal{L}^w$, with domain $C_c^\infty$, on $L^2(w)$. It is based on suitable integrability or (essential) boundedness properties of the weight $w$ and  its horizontal derivatives. As a corollary, the essential self-adjointness of $\mathcal{L}^{w_\alpha}$ when $\alpha \geq 1$ (Theorem~\ref{essselfadj:walpha}) is obtained. By using a well known technique (see e.g.~\cite{DaviesSimon, Carbonaro, Inglis}), in Section~\ref{SchrodingerSection} we then show how to reduce the study of $\mathcal{L}^w$ to the study of the Schr\"odinger operator $\mathcal{L} + V$ as an operator on $L^2(G, \dd y)$, where the potential $V$ is
\begin{equation*}
V= -\frac{1}{4}\frac{\abs*{\nabla_\mathcal{H}w}^2}{w^2} - \frac{1}{2}\frac{\mathcal{L}w}{w}.
\end{equation*}

In Section~\ref{sec:discr}, we restrict our analysis to M\'etivier groups and the case when $N$ is the Kaplan  norm. We prove that the spectrum of $\mathcal{L}^{w_\alpha}$ is purely discrete if and only if $\alpha>2$, thus proving Inglis's conjecture \cite{Inglis}. To prove the theorem we reduce matters to studying the spectrum of the Schr\"odinger operator $\mathcal{L} + V_\alpha$
obtained by conjugating $\mathcal{L}^{w_\alpha}$ with a suitable isometry. We then apply a generalization to stratified groups of a theorem of B.~Simon relating the discreteness of the spectrum to the behaviour of the sublevel sets of the potential $V_\alpha$ \cite{Simon1}.
\subsection{Preliminaries.}
A stratified group $G$ is a connected, simply connected Lie group whose Lie algebra $\mathfrak{g}$ admits a direct sum decomposition
\begin{equation}\label{stratification}
\mathfrak{g}=\mathfrak{g}_1 \oplus \cdots \oplus \mathfrak{g}_\nu
\end{equation}
such that $[\mathfrak{g}_1,\mathfrak{g}_{k-1}]= \mathfrak{g}_k$ for every $k\leq \nu$, and $[\mathfrak{g}_1, \mathfrak{g}_\nu]=0$. The first layer $\mathfrak{g}_1$ is often referred to as \emph{horizontal layer}. We set $n_k = \dim \mathfrak{g}_k$; $Q=\sum_{k=1}^\nu k \, n_k$ will denote the homogeneous dimension of $G$. We shall write $G^*$ instead of $G\setminus\{e\}$.

The Lie algebra $\mathfrak{g}$ is canonically endowed with the family of dilations $\delta_r \left(\sum_{k=1}^\nu Y_k\right) = \sum_{k=1}^{\nu} r^k Y_k$, where $Y_k\in \mathfrak{g}_k$. One can then define a family of dilations on $G$ via the exponential map, by $\delta_r (\exp Y) = \exp \delta_r(Y)$.

Recall that $G$ is unimodular, so that it can be endowed with a Haar measure $\dd y$ which is both left and right invariant. Thus, one can define the space $L^p$ as the space of (equivalence classes of) measurable functions $f$ such that $\abs{f}^p$ is integrable with respect to $\dd y$, with the usual norm.
The space $L^p(\mathfrak{g}_1)$ of $p$-th power integrable $\mathfrak{g}_1$-valued functions is defined analogously.

From now on, we fix a basis $(X_j)_{1\leq j\leq n_1}$ of left-invariant vector fields for $\mathfrak{g}_1$. We endow $\mathfrak{g}_1$ with the unique inner product which turns $(X_j)$ into an orthonormal basis; if $f$ is a differentiable function on $G$, its horizontal gradient is the vector field
\[
\nabla_\mathcal{H} f \coloneqq \sum_{j=1}^{n_1} (X_j f)X_j.
\]
Observe that, if $f,g$ are smooth functions on $G$, then $(\nabla_\Hc f)(g)=\nabla_\Hc f \cdot \nabla_\Hc g$, where the right hand side is the inner product of the vector fields $\nabla_\Hc f, \nabla_\Hc g$. 

For a vector-valued differentiable function $F=(F_1,\dots,F_{n_1})$, its horizontal divergence is the function
\[\divv_\Hc F = \sum_{j=1}^{n_1}X_j F_j,\]
and the sub-Laplacian $\mathcal{L}$ will then be the (positive) operator
\[
\mathcal{L}\coloneqq - \divv_\Hc \nabla_\Hc = -\sum_{j=1}^{n_1} X_j^2.
\] 
With domain $C_c^\infty$, $\mathcal{L}$ is well known to be essentially self-adjoint on $L^2$. With a slight abuse of notation, we shall denote by $\mathcal{L}$ also its closure and the operator interpreted in the sense of distributions. $\Dom(\mathcal{L})$ will stand for the set of $f\in L^2$ such that $\mathcal{L} f$ also belongs to $L^2$.

\begin{definition}[cf.~\cite{Folland}]\label{defSob}
	For every $k\in \Z$ define $S^{2}_{k}$ as the completion of $C_c^\infty$ with respect to the norm
	\[f \mapsto \norm{(I+\mathcal{L})^{k/2}f}_{L^2},\] 
	and
	\[S^{2}_{k,\loc}= \{f\in \mathcal{D}'\colon \; \phi f \in S^{2}_{k} \;\; \forall \phi \in C_c^\infty\}.\]
\end{definition}

In the following lemma we state some general properties of the Sobolev spaces just introduced, which will be used later on. They are the analogues for $S^2_{k,\loc}$ of some properties of $S^2_k$ that can be found in~\cite{Folland}. The proof is omitted.

\begin{lemma}\label{lemmainiz}
	The following hold:
	\begin{itemize}
		\item[\rmi] $S^{2}_{1,\loc} = \{f\in L^2_\loc \colon \nabla_\Hc f\in L^2_\loc\}$ and $S^{2}_{2,\loc} = \{f\in S^{2}_{1,\loc} \colon \mathcal{L} f\in L^2_\loc\}$;
		\item[\rmii] If $f\in L^2_\loc$, then $X f \in S^{2}_{-1,\loc}$ for every $X\in \mathfrak{g}_1$;
		\item[\rmiii] If $f \in L^2_\loc$ is such that $
		\mathcal{L}f \in S^{2}_{-1,\loc}$, then $f \in S^{2}_{1,\loc}$. 
	\end{itemize}
\end{lemma}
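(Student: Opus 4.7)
My plan is to reduce each of the three local statements to its global counterpart in~\cite{Folland} by localizing with an arbitrary cutoff $\phi\in C_c^\infty$ and applying the Leibniz rules for $X_j$ and $\mathcal{L}$. The global facts I intend to use are: $S^{2}_{0}=L^2$; $S^{2}_{1}=\{f\in L^2:X_j f\in L^2\ \forall j\}$ with equivalent norm; $S^{2}_{2}=\{f\in L^2:\mathcal{L}f\in L^2\}$; each $X_j$ is bounded $S^{2}_{k}\to S^{2}_{k-1}$; and $I+\mathcal{L}:S^{2}_{k+2}\to S^{2}_{k}$ is an isomorphism for every $k\in\Z$. I also need that multiplication by a fixed $\phi\in C_c^\infty$ is bounded $S^{2}_{k}\to S^{2}_{k}$ for $k\in\{-1,0,1,2\}$; for $k\ge 0$ this is immediate from the product rule, and for $k=-1$ it follows by duality from the case $k=1$.

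For \rmi, the definition gives $f\in S^{2}_{1,\loc}$ iff $\phi f\in S^{2}_{1}$ for every $\phi\in C_c^\infty$, and the identities
\[
X_j(\phi f)=\phi\,X_j f+(X_j\phi)\,f,\qquad \phi\,X_j f=X_j(\phi f)-(X_j\phi)\,f
\]
show directly that this is equivalent to $f\in L^2_\loc$ and $X_j f\in L^2_\loc$ for all $j$. The characterization of $S^{2}_{2,\loc}$ follows along the same lines from the identity
\[
\mathcal{L}(\phi f)=\phi\,\mathcal{L}f-2\,\nabla_\Hc\phi\cdot\nabla_\Hc f+(\mathcal{L}\phi)\,f,
\]
once one observes that $S^{2}_{2,\loc}\subset S^{2}_{1,\loc}$ by the first part, so that $\nabla_\Hc f\in L^2_\loc$ and the products above make sense in $L^2$.

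For \rmii, fix $\phi\in C_c^\infty$ and write $\phi(Xf)=X(\phi f)-(X\phi)\,f$ in $\mathcal{D}'$. Since $\phi f\in L^2=S^{2}_{0}$, the global boundedness $X:S^{2}_{0}\to S^{2}_{-1}$ puts $X(\phi f)$ in $S^{2}_{-1}$, while $(X\phi)\,f\in L^2\subset S^{2}_{-1}$.

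For \rmiii, I rewrite the Leibniz identity above as
\[
(I+\mathcal{L})(\phi f)=\phi f+\phi\,\mathcal{L}f-2\,\nabla_\Hc\phi\cdot\nabla_\Hc f+(\mathcal{L}\phi)\,f
\]
and check that the right-hand side lies in $S^{2}_{-1}$: the terms $\phi f$ and $(\mathcal{L}\phi)\,f$ belong to $L^2\subset S^{2}_{-1}$; the term $\phi\,\mathcal{L}f$ belongs to $S^{2}_{-1}$ because, by hypothesis and a larger cutoff, $\mathcal{L}f\in S^{2}_{-1,\loc}$ and multiplication by $\phi$ preserves $S^{2}_{-1}$; finally, by \rmii each $X_j f\in S^{2}_{-1,\loc}$, so $(X_j\phi)(X_j f)\in S^{2}_{-1}$ by the same multiplication property. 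The global isomorphism $I+\mathcal{L}:S^{2}_{1}\to S^{2}_{-1}$ then yields a unique $u\in S^{2}_{1}$ with $(I+\mathcal{L})u=(I+\mathcal{L})(\phi f)$ in $\mathcal{D}'$, and injectivity of $I+\mathcal{L}$ on $L^2$ forces $u=\phi f$, whence $\phi f\in S^{2}_{1}$. The main obstacle is precisely this distributional bookkeeping in \rmiii---ensuring each term on the right-hand side really defines an element of $S^{2}_{-1}$ rather than only of $\mathcal{D}'$---which reduces to the boundedness of multiplication by a $C_c^\infty$ function on $S^{2}_{-1}$, obtained by dualizing the analogous statement on $S^{2}_{1}$.
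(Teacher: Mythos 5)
Your localization argument is correct: the paper itself omits the proof of Lemma~\ref{lemmainiz}, merely noting that the statements are the local analogues of global facts in~\cite{Folland}, and your reduction via cutoffs, the Leibniz rules, the boundedness of multiplication by $C_c^\infty$ functions on $S^2_{k}$ (with the $k=-1$ case by duality), and the isomorphism $I+\mathcal{L}\colon S^2_{1}\to S^2_{-1}$ together with injectivity on $L^2$ is exactly the intended filling-in of that omission. In particular your handling of \rmiii\ --- checking term by term that the right-hand side lies in $S^2_{-1}$ before inverting $I+\mathcal{L}$ --- addresses the only genuinely delicate point.
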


We shall also need the following proposition. It is the analogue for the sub-Laplacian on $G$ of a well known inequality, due to Kato~\cite{Kato2}, involving the Laplacian on $\R^n$. We recall that, if $T_1,T_2$ are two distributions, $T_1\leq T_2$ means that $T_2-T_1$ is a positive (Radon) measure. The $\sign$ function on $\R^d$, $d\geq 1$, or $\C$ is defined as $\frac{z}{|z|}$ when $z\neq 0$, and $0$ when $z=0$.
\begin{proposition}
	\label{KatoSub}
	Let $f\in L^1_{\loc}$ such that $\mathcal{L} f\in L^1_{\loc}$. Then $\mathcal{L}\abs{f}\leq \Re\left[\overline{\sign(f)}\,\mathcal{L}f\right]$.
\end{proposition}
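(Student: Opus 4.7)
My plan is to prove, for smooth $f$, the pointwise inequality
\[\mathcal{L}\sqrt{|f|^2+\delta^2}\,\leq\, \frac{\Re(\overline{f}\,\mathcal{L}f)}{\sqrt{|f|^2+\delta^2}},\qquad \delta>0,\]
and then to deduce the statement by mollification followed by the limits $\epsilon\to 0^+$ and $\delta\to 0^+$ (in this order). Working with $\sqrt{|f|^2+\delta^2}$ instead of $|f|$ keeps everything bounded away from zero on $\{f=0\}$ until the very last step, which is exactly what makes the sign function manageable on that set.

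For $f\in C^\infty(G;\C)$, setting $f_\delta\coloneqq \sqrt{|f|^2+\delta^2}$, a direct computation gives $X_j f_\delta=\Re(\overline{f}\,X_j f)/f_\delta$ and
\[X_j^2 f_\delta\,=\,\frac{|X_j f|^2+\Re(\overline{f}\,X_j^2 f)}{f_\delta}\,-\,\frac{\bigl(\Re(\overline{f}\,X_j f)\bigr)^2}{f_\delta^3}.\]
Summing over $j$ with $\mathcal{L}=-\sum_j X_j^2$ and applying Cauchy--Schwarz in the form $\bigl(\Re(\overline{f}\,X_j f)\bigr)^2\leq |f|^2|X_j f|^2\leq f_\delta^2|X_j f|^2$ makes the $|\nabla_\Hc f|^2/f_\delta$ terms cancel, yielding the displayed pointwise (hence distributional) inequality.

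For general $f$ I regularize by left convolution, $f_\epsilon\coloneqq \rho_\epsilon * f$, with $(\rho_\epsilon)\subseteq C_c^\infty(G)$ a nonnegative approximate identity supported in a shrinking neighbourhood of the identity. Left-invariance of the $X_j$ yields $\mathcal{L}f_\epsilon=\rho_\epsilon*\mathcal{L}f$, so $f_\epsilon\in C^\infty$ with $f_\epsilon\to f$ and $\mathcal{L}f_\epsilon\to \mathcal{L}f$ in $L^1_{\loc}$. Pairing the smooth-case inequality applied to $f_\epsilon$ with $\psi\in C_c^\infty$, $\psi\geq 0$, gives
\[\int_G \sqrt{|f_\epsilon|^2+\delta^2}\,\mathcal{L}\psi\,dy\,\leq\,\int_G \psi\,\frac{\Re(\overline{f_\epsilon}\,\mathcal{L}f_\epsilon)}{\sqrt{|f_\epsilon|^2+\delta^2}}\,dy.\]
Sending $\epsilon\to 0^+$ with $\delta$ fixed, the left-hand side passes to the limit by dominated convergence; on the right, along a subsequence with $f_\epsilon\to f$ a.e., the factor $\overline{f_\epsilon}/\sqrt{|f_\epsilon|^2+\delta^2}$ is uniformly bounded by $1$ and converges a.e.\ to $\overline{f}/\sqrt{|f|^2+\delta^2}$, while $\mathcal{L}f_\epsilon\to\mathcal{L}f$ in $L^1_{\loc}$, and a routine bounded-times-$L^1$ argument yields convergence. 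Finally, letting $\delta\to 0^+$, the left-hand side tends to $\langle \mathcal{L}|f|,\psi\rangle$ and, by dominated convergence against $\psi|\mathcal{L}f|\in L^1$, the right-hand side tends to $\int_G\psi\,\Re(\overline{\sign(f)}\,\mathcal{L}f)\,dy$, completing the proof. The anticipated main obstacle---the sign function on $\{f=0\}$---is entirely circumvented by this two-step order of limits, since $\delta$ is kept positive during mollification and no information about $\mathcal{L}f$ on $\{f=0\}$ is ever needed.
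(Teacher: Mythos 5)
Your argument is correct and is precisely the ``straightforward adaptation of \cite[Theorem X.27]{ReedSimonII}'' that the paper invokes while omitting the proof: the regularization $\sqrt{|f|^2+\delta^2}$, the Cauchy--Schwarz step $\bigl(\Re(\overline{f}\,X_jf)\bigr)^2\le f_\delta^2|X_jf|^2$, left mollification using $\mathcal{L}(\rho_\epsilon*f)=\rho_\epsilon*\mathcal{L}f$, and the limits $\epsilon\to0^+$ then $\delta\to0^+$ are exactly the intended route. The only cosmetic imprecision is the phrase that the $|\nabla_\Hc f|^2/f_\delta$ terms ``cancel''; they do not cancel but are dominated and discarded with the correct sign, which your computation in fact shows.
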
	
Its proof is a straightforward adaptation to our setting of~\cite[Theorem X.27]{ReedSimonII}, and is omitted. A similar statement, in a much more general context but with  stronger hypotheses, can be found in~\cite{Simon2}.

\section{Essential self-adjointness of weighted sub-Laplacians}\label{sec:weighted:schr}
In this section, $G$ will be a stratified \emph{non commutative} group, so that $\mathfrak{g}$ has at least two layers and $Q\geq 4$. Henceforth, we adopt the well known notation of writing $(\, \cdot\,,\, \cdot\, )$ for the inner product of elements in some Hilbert space ($L^2$ or $L^2(w)$, see Definition~\ref{defspazipesati}), while $\langle\, \cdot\, ,\,\cdot\, \rangle$ will stand for (sesquilinear) distributional pairings.

\begin{definition}\label{admissibleweight}
	We shall call \emph{admissible weight} a strictly positive and weakly differentiable function $w$ on $G$, such that $w$, $ w^{-1} \in L^\infty_\loc$,  and $\nabla_\Hc w \in L^2_\loc $.
\end{definition}

\begin{definition}\label{defspazipesati}
	Let $w$ be an admissible weight. We denote by $L^2(w)$ the Hilbert space
	\[\{ f: G \to \C \colon f\, \mbox{measurable}, \; \int_{G} |f(y)|^2w(y)\,dy <\infty\}\]
	with the usual scalar product and norm. The space $L^2(w;\mathfrak{g}_1)$ of square-integrable $\mathfrak{g}_1$-valued functions is defined analogously. Moreover, we denote by \[S^2_{1}(w)= \{ f\in L^2(w) \colon \; \nabla_\Hc f \in L^2(w;\mathfrak{g}_1)\}\] the weighted Sobolev space, endowed with the standard norm.
\end{definition}

By routine arguments, one can prove that $S^2_{1}(w)$ is a Hilbert space and that $C^\infty_c$ is dense in it. Therefore, the sesquilinear form
\[
\mathfrak{t}(f,g)\coloneqq ( \nabla_\Hc f, \nabla_\Hc g)_{L^2(w)}, \quad \Dom(\mathfrak{t}) = S^2_{1}(w)
\]
is symmetric, densely defined, accretive, continuous and closed. By the general theory of sesquilinear forms (see, e.g., \cite[IV, Theorem 2.6]{Kato1}) there exists a positive self-adjoint operator $\mathcal{L}^w$ such that
\begin{gather*}
\Dom(\mathcal{L}^w) = \{ f\in S^2_{1}(w)\colon\, \exists \tilde f \in L^2(w)\; \forall \, g\in S^2_{1}(w)\quad (\tilde f,g)_{L^2(w)} = \mathfrak{t}(f,g) \}, \\ \mathcal{L}^w f\coloneqq \tilde f \quad \forall f\in \Dom(\mathcal{L}^w).
\end{gather*}
An easy computation shows that for every $\phi$, $\psi \in C_c^\infty$
\begin{equation*}\label{weightedSL}
(\mathcal{L}^{w} \phi, \psi)_{L^2(w)}= \mathfrak{t}(\phi,\psi) = \left( \mathcal{L}\phi - \frac{\nabla_\mathcal{H} {w}}{{w}} \cdot \nabla_{\Hc}\phi,  \psi\right)_{L^2(w)}.
\end{equation*}
It is then natural to define the operator
\begin{equation}\label{Lw0}
\mathcal{L}^w_0\coloneqq \mathcal{L} - \frac{\nabla_\mathcal{H} {w}}{{w}} \cdot \nabla_{\Hc}, \quad \Dom(\mathcal{L}^w_0) \coloneqq C_c^\infty.
\end{equation}
Notice that, since  $\nabla_\Hc w \in L^2_\loc$ and  $w^{-1}\in L^\infty_\loc$ by Definition~\ref{admissibleweight}, the operator $\mathcal{L}^w_0 $ maps $C^\infty_c$ to $ L^2(w)$. Since for every $f\in S^{2}_{1,\loc}$, we have $\frac{\nabla_\mathcal{H} {w}}{{w}} \cdot \nabla_{\Hc} f \in L^1_\loc$, we may define also the operator
\[
\mathcal{L}^w_1 \coloneqq \mathcal{L} - \frac{\nabla_\mathcal{H} {w}}{{w}} \cdot \nabla_{\Hc} \colon S^{2}_{1,\loc} \to \mathcal{D}',
\]
where the derivatives are to be understood in the distributional sense.  The following theorem, whose proof will occupy the remainder of this section, describes the relations among the operators $\mathcal{L}^w_0$, $\mathcal{L}^w$ and $\mathcal{L}^w_1$.

\begin{theorem}\label{teo}
	If $w$ is an admissible weight, then $\mathcal{L}^w$ is a positive self-adjoint extension of $\mathcal{L}^w_0$ and $\Dom(\mathcal{L}^w) = \{f\in S^2_{1}(w) \colon\, \mathcal{L}^w_1 f\in L^2(w)\}$.
	
	Assume, in addition, that one of the following conditions holds:
	\begin{enumerate}
		\item[$\mathrm{(a)}$] $\nabla_\Hc w \in L^\infty_\loc$ and $\mathcal{L} w\in L^Q_\loc$;
		\item[$\mathrm{(b)}$]  $\frac{\nabla_\Hc w}{(1+N)w}\in L^\infty$ for some (hence any) homogeneous norm $N$ on $G$.
	\end{enumerate}
	Then $\mathcal{L}^w_0$ is essentially self-adjoint, i.e.\ $\mathcal{L}^w$ is the unique self-adjoint extension of $\mathcal{L}^w_0$.
\end{theorem}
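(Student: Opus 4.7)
The proof splits into two claims. The first is an unfolding of the Dirichlet form definition of $\mathcal{L}^w$. For $\phi\in C_c^\infty$, integration by parts (legitimate since $w,w^{-1}\in L^\infty_\loc$ and $\nabla_\Hc w\in L^2_\loc$) yields $\mathfrak{t}(\phi,g)=(\mathcal{L}^w_0\phi,g)_{L^2(w)}$ for every $g\in S^2_1(w)$; since $\mathcal{L}^w_0\phi\in L^2(w)$, this places $\phi\in\Dom(\mathcal{L}^w)$ with $\mathcal{L}^w\phi=\mathcal{L}^w_0\phi$. For the characterization of $\Dom(\mathcal{L}^w)$, $f\in\Dom(\mathcal{L}^w)$ is equivalent to $f\in S^2_1(w)$ together with $g\mapsto\mathfrak{t}(f,g)$ extending continuously to $L^2(w)$; restricting $g$ to $C_c^\infty$, integration by parts identifies the dual element as $\mathcal{L}^w_1 f\in\mathcal{D}'$, and density of $C_c^\infty$ in $S^2_1(w)$ and $L^2(w)$ yields the stated identification.

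For essential self-adjointness, since $\mathcal{L}^w_0$ is positive symmetric, it suffices to show $\ker((\mathcal{L}^w_0)^*+I)=\{0\}$. Take $u\in L^2(w)$ with $(\mathcal{L}^w_0)^*u=-u$; unwinding yields the distributional equation $\mathcal{L}(uw)+\divv_\Hc(u\nabla_\Hc w)=-uw$. The first key step is to upgrade to $u\in S^2_{1,\loc}$. Under either (a) or (b), $\nabla_\Hc w\in L^\infty_\loc$---for (b) because $N$ is locally bounded---hence $u\nabla_\Hc w\in L^2_\loc$. Lemma~\ref{lemmainiz}\,(ii) gives $\divv_\Hc(u\nabla_\Hc w)\in S^2_{-1,\loc}$, whence $\mathcal{L}(uw)\in S^2_{-1,\loc}$; by Lemma~\ref{lemmainiz}\,(iii), $uw\in S^2_{1,\loc}$. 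The Leibniz identity $\nabla_\Hc u=\nabla_\Hc(uw)/w-u\nabla_\Hc w/w$, combined with $\nabla_\Hc w/w\in L^\infty_\loc$, gives $\nabla_\Hc u\in L^2_\loc$. Hence $u\in S^2_{1,\loc}$, the equation reads $\mathcal{L}u-\frac{\nabla_\Hc w}{w}\cdot\nabla_\Hc u=-u$ in $L^2_\loc$, and Lemma~\ref{lemmainiz}\,(i) further places $u$ in $S^2_{2,\loc}$.

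The second step is a Caccioppoli-type energy estimate. Multiplying the equation by $\bar u\chi^2 w$ with $\chi\in C_c^\infty$ and integrating by parts in the $\mathcal{L}u$ term (legitimate because $u\chi^2 w\in S^2_1$ with compact support, thanks to $\nabla_\Hc w\in L^\infty_\loc$), the terms involving $\nabla_\Hc w$ cancel exactly, and taking real part leaves
\[
\int\chi^2 w\,|\nabla_\Hc u|^2\,dy+\int|u|^2\chi^2 w\,dy=-2\Re\int\chi w\,\bar u\,\nabla_\Hc u\cdot\nabla_\Hc\chi\,dy.
\]
Cauchy--Schwarz on the right followed by the absorption $A+B\leq 2\sqrt{AC}\Rightarrow B\leq C$ gives $\int|u|^2\chi^2 w\,dy\leq\int|u|^2|\nabla_\Hc\chi|^2 w\,dy$. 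Choosing cutoffs $\chi_R\in C_c^\infty$ equal to $1$ on $B_R$, supported in $B_{2R}$, with $|\nabla_\Hc\chi_R|\leq C/R$, the right-hand side is bounded by $(C/R)^2\int_{B_{2R}\setminus B_R}|u|^2 w\,dy\to 0$ as $R\to\infty$ since $u\in L^2(w)$, while the left converges to $\|u\|_{L^2(w)}^2$; hence $u=0$.

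The main obstacle is executing the regularity upgrade $u\in S^2_{1,\loc}$ together with the subsequent justification of integration by parts in the Caccioppoli estimate; this is exactly where hypotheses (a) and (b) intervene, both guaranteeing $\nabla_\Hc w\in L^\infty_\loc$ so that the Leibniz computation and the cutoff identity make analytic sense. The additional integrability $\mathcal{L}w\in L^Q_\loc$ in (a) is natural in view of the Schr\"odinger reformulation of Section~\ref{SchrodingerSection}, where $Q$ is the critical Sobolev exponent for the potential $V=-\tfrac14|\nabla_\Hc w|^2/w^2-\tfrac12\mathcal{L}w/w$, and enters any refined estimate that has to control $\mathcal{L}w/w$ against the sub-Riemannian Sobolev embedding $S^2_1\hookrightarrow L^{2Q/(Q-2)}$.
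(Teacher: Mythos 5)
Your proof of the first assertion (that $\mathcal{L}^w$ extends $\mathcal{L}^w_0$ and the identification of $\Dom(\mathcal{L}^w)$) is essentially the paper's argument; the only presentational difference is that the paper tests $\mathfrak{t}(f,\cdot)$ against $w^{-1}\phi$ with $\phi\in C_c^\infty$ rather than against $\phi$ itself, which keeps every intermediate object an honest $L^1_\loc$ function and avoids multiplying the distribution $\divv_\Hc(w\nabla_\Hc f)$ by the merely admissible weight $w$; you should adopt that device. For the essential self-adjointness, however, you take a genuinely different and unified route. The paper treats the two hypotheses separately: under (a) it bootstraps a solution of $(\mathcal{L}^w_0)^*f+f=0$ to $S^2_{2,\loc}$ via the Sobolev embedding $S^2_1\hookrightarrow L^{2Q/(Q-2)}$ and H\"older with exponents $Q/2$ and $Q/(Q-2)$ --- this is exactly where $\mathcal{L}w\in L^Q_\loc$ enters, to control $f\mathcal{L}w$ --- and then runs Strichartz's cutoff argument; under (b) it instead proves that $C_c^\infty$ is dense in $\Dom(\mathcal{L}^w)$ for the graph norm, using the global bound on $\nabla_\Hc w/((1+N)w)$ to kill the term $(\mathcal{L}^w_0\psi_k)f$ on the annuli $\{k\le N\le 2k\}$. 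You observe that both (a) and (b) imply $\nabla_\Hc w\in L^\infty_\loc$, run the regularity upgrade only to the first order (which is the paper's Lemma~\ref{Bootstrap1}\,\rmi\ and needs nothing beyond $\nabla_\Hc w\in L^\infty_\loc$), and then close with a Caccioppoli estimate in which the drift terms cancel identically. I have checked the cancellation and the absorption $A+B\le 2\sqrt{AC}\Rightarrow B\le C$; they are correct, and your cutoffs $\psi\circ\delta_{1/R}$ exist with $\abs{\nabla_\Hc\chi_R}\le C/R$ exactly as in Remark~\ref{rem1}. What your approach buys is considerable: it handles (a) and (b) by one argument, and it apparently never uses $\mathcal{L}w\in L^Q_\loc$ at all, so it would prove the stronger statement that admissibility plus $\nabla_\Hc w\in L^\infty_\loc$ suffices.

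Precisely because your hypotheses come out weaker than the paper's, the two delicate steps deserve to be written out rather than asserted. First, passing from the distributional identity $\mathcal{L}(uw)+\divv_\Hc(u\nabla_\Hc w)=-uw$ to the weak form $\langle w\nabla_\Hc u,\nabla_\Hc\phi\rangle=-\langle uw,\phi\rangle$, and thence to $\mathcal{L}u-\frac{\nabla_\Hc w}{w}\cdot\nabla_\Hc u=-u$ in $L^2_\loc$, involves Leibniz rules for products of distributions with the locally Lipschitz functions $w$ and $w^{-1}$; the safe route is to stay in weak form (pair $w\nabla_\Hc u$ against gradients of test functions) and never expand $u\,\mathcal{L}w$, since that product is exactly what the paper's hypothesis $\mathcal{L}w\in L^Q_\loc$ is designed to make sense of. Second, substituting the non-smooth test function $\bar u\chi^2$ (or $\bar u\chi^2 w$) into the weak identity requires the mollification/density argument that the paper isolates as Lemma~\ref{lem:6}\,\rmii; cite it or reproduce it. Your closing remark about the role of $\mathcal{L}w\in L^Q_\loc$ in the Schr\"odinger reformulation is off target: in the paper that hypothesis is consumed entirely inside Lemma~\ref{Bootstrap1}\,\rmii\ to get $f\mathcal{L}w\in L^2_\loc$, a step your argument bypasses by reading $\mathcal{L}u\in L^2_\loc$ directly off the eigenvalue equation.
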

\begin{remark}\label{rem1}
	The essential self-adjointness of $\mathcal{L}_0^w$ is well known 
	when the weight $w$ is smooth (see e.g.\cite[Proposition 3.2.1]{Bakryetal}). The classical argument makes use of 
	two ingredients:
	\begin{itemize}
		\item[\rmi] the hypoellipticity of $\mathcal{L}_0^w$:  any $f\in 
		\Dom((\mathcal{L}^w_0)^*)$ such that $(\mathcal{L}^w_0)^*f+f=0$ is 
		smooth;
		\item[\rmii] completeness: there exists an increasing sequence $
		(\psi_k)$ of positive functions in $C^\infty_c$ that converges to 
		$1$ almost everywhere and $|\nabla_\mathcal{H} \psi_k|\le 1/k$ for 
		all $k\ge 1$. We shall refer to the sequence $(\psi_k)$ as an 
		\textit{approximate unity} for $G$.
	\end{itemize}
	Completeness does not depend on the weight and therefore holds 
	also under our assumptions. It suffices to define $\psi_k=\psi\circ
	\delta_{1/k}$, $k\ge 1$, where $\psi $ is a  function  in $C^\infty_c$ 
	such that $0\le \psi \le 1$ and $\psi(x)=1$ if $N(x)\le1$ and $
	\psi(x)=0$ if $N(x)\ge 2$.\par
	Instead, hypoellipticity of $\mathcal{L}_0^w$ fails when the weight 
	$w$ is nonsmooth. However, we shall see that a weaker form of 
	hypoellipticity holds if $\nabla_\mathcal{H} w\in L^\infty_\loc$ and $
	\mathcal{L}w\in L^Q_\loc$ (see Lemma \ref{Bootstrap1}).
\end{remark}
To prove the theorem we shall need two lemmas. In their proofs and henceforth, we shall apply the common rules of derivation to products and compositions of distributions with Lipschitz functions. Each equality may be proved either by density arguments, or by convolution on the left with a suitable approximate identity.

\begin{lemma}\label{lem:6} 
	Let $w$ be an admissible weight.  If 
	\begin{itemize}
		\item[\rmi] either $f,g\in S^2_1(w)$ and  $\mathcal{L}^w_1 f\in L^2(w)$
		\item[\rmii] or $f,g\in S^2_{1,\loc}$, $\mathcal{L}^w_1 f\in L^2_\loc$, $g$ has compact support and $\nabla_\mathcal{H} w\in L^\infty_\loc$,
	\end{itemize} then
	\[
	( \mathcal{L}^w_1 f ,  g)_{L^2(w)}=\langle \nabla_\Hc f, w \nabla_\Hc g\rangle.
	\]
\end{lemma}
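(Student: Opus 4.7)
The plan is to verify the identity first for $g\in C_c^\infty$ by a distributional integration by parts, and then to extend to both stated classes by density.

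For $g\in C_c^\infty$, the admissibility of $w$ together with $f\in S^2_{1,\loc}$ yields $\tfrac{\nabla_\Hc w}{w}\cdot \nabla_\Hc f\in L^1_\loc$ (as noted right after \eqref{Lw0}), so $\mathcal{L}f\in L^1_\loc$ because it differs from $\mathcal{L}^w_1 f$ by this advection term. Hence
\[
(\mathcal{L}^w_1 f,g)_{L^2(w)}=\int_G \mathcal{L}f\cdot\overline g\,w\,dy-\int_G \nabla_\Hc f\cdot \nabla_\Hc w\,\overline g\,dy,
\]
with all integrands in $L^1$. Since $w$ is weakly differentiable with $w,w^{-1}\in L^\infty_\loc$ and $\nabla_\Hc w\in L^2_\loc$, the product $gw$ has compact support, belongs to $L^\infty\cap S^2_1$, and satisfies $\nabla_\Hc(gw)=w\,\nabla_\Hc g+g\,\nabla_\Hc w$. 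One then convolves $f$ on the left with a smooth compactly supported approximate identity $\chi_\eps$; since left-invariant vector fields commute with this convolution, $f*\chi_\eps\in C^\infty$ satisfies the classical pointwise integration by parts against $gw$ on a neighbourhood of $\supp g$, and the limits $\mathcal{L}(f*\chi_\eps)\to \mathcal{L}f$ in $L^1_\loc$ and $\nabla_\Hc(f*\chi_\eps)\to \nabla_\Hc f$ in $L^2_\loc$ deliver
\[
\int_G \mathcal{L}f\cdot\overline{gw}\,dy=\int_G \nabla_\Hc f\cdot \overline{\nabla_\Hc(gw)}\,dy.
\]
Substituting the expansion of $\nabla_\Hc(gw)$ and cancelling the terms containing $\nabla_\Hc w$ yields the asserted identity for $g\in C_c^\infty$.

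In case \rmi, I would pick $g_n\in C_c^\infty$ with $g_n\to g$ in $S^2_1(w)$ (density of $C_c^\infty$ in $S^2_1(w)$ was recorded just before Definition~\ref{defspazipesati}): the left-hand side converges because $\mathcal{L}^w_1 f\in L^2(w)$, while the right-hand side converges by Cauchy--Schwarz in $L^2(w;\mathfrak{g}_1)$. In case \rmii, I would mollify $g$ so as to produce $g_n\in C_c^\infty$ with supports in a fixed compact set and $g_n\to g$ in $S^2_1$; the key observation is that here $\tfrac{\nabla_\Hc w}{w}\cdot \nabla_\Hc f$ upgrades to $L^2_\loc$ thanks to the extra assumption $\nabla_\Hc w\in L^\infty_\loc$, so that $\mathcal{L}^w_1 f\cdot w\in L^2_\loc$ and both sides converge by local $L^2$-duality combined with $w\in L^\infty_\loc$.

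The principal technical obstacle I anticipate is the first integration by parts, because $gw$ is not itself a test function and cannot be plugged directly into the distributional definition of $\mathcal{L}f$. Mollifying $f$ (rather than $gw$, which would introduce extra commutator terms from the non-smoothness of $w$) reduces matters to the smooth case and makes every limiting pairing legitimate.
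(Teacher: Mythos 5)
Your proposal is correct, and its core coincides with the paper's proof: both establish the identity for $g\in C_c^\infty$ by reducing it to $\langle \mathcal{L}f, wg\rangle=\langle\nabla_\Hc f,\nabla_\Hc(wg)\rangle$ and proving that by mollifying $f$ through convolution on the left with a smooth approximate identity, and both treat case \rmi\ by density of $C^\infty_c$ in $S^2_1(w)$. Where you genuinely diverge is case \rmii: the paper multiplies $f$ by a cutoff $\phi$ equal to $1$ near $\supp g$ and reduces to case \rmi, which is precisely where $\nabla_\Hc w\in L^\infty_\loc$ enters (to keep the term $f\,\mathcal{L}^w_1\phi$ in $L^2(w)$); you instead mollify $g$ within a fixed compact set and pass to the limit, which uses only $w\in L^\infty_\loc$, $\mathcal{L}^w_1 f\in L^2_\loc$ and $\nabla_\Hc f\in L^2_\loc$. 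In fact your appeal to $\nabla_\Hc w\in L^\infty_\loc$ to upgrade the advection term is superfluous: $w\,\mathcal{L}^w_1 f\in L^2_\loc$ already follows from the stated hypotheses, so your route shows that the extra regularity assumption in \rmii\ is not actually needed for the conclusion. One small caution: write the mollification as $\chi_\eps * f$ rather than $f*\chi_\eps$; on a noncommutative group the left-invariant fields satisfy $X(\chi*f)=\chi*(Xf)$ but not $X(f*\chi)=(Xf)*\chi$, and your verbal description (``convolve on the left'') is the correct one while your notation is not.
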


\begin{proof}
	We first assume \rmi. Since both sides of the asserted equality are continuous functions of $g\in S^2_{1}(w)$, by density it will be enough to prove our assertion when $g \in C^\infty_c$. 
	If $\mathcal{L}^w_1 f\in L^2(w)$, since $\nabla_\Hc f$  and $\frac{\nabla_\Hc w}{w} \in L^2_\loc$, we infer that $\mathcal{L}f$ and $\frac{\nabla_\Hc w}{w}\cdot \nabla_\Hc f \in L^1_\loc$. Therefore, $\mathcal{L}^w_1 f=\mathcal{L}f- \frac{\nabla_\Hc w}{w}\cdot \nabla_\Hc f  \in L^1_\loc$ and the equality to prove  is implied by:
	\[
	\langle \mathcal{L} f, w g \rangle=\langle \nabla_\Hc f, \nabla_\Hc(w g)\rangle,
	\]
	where the first pairing refers to the duality between $L^1_\loc$ 
	and the space of $L^\infty$ functions with compact support, while 
	the second pairing refers to the duality between $L^2_\loc$ and the 
	space of $L^2$ functions with compact support. Since $wg\in 
	S^2_1$ and has compact support, an integration by parts shows 
	that this identity holds for $f\in C^\infty$. The general case can be 
	reduced to this, by convolving $f$ on the left with an 
	approximate identity $(\eta_k)$ of smooth functions with 
	compact support. Since $\mathcal{L}$ and $\nabla_\mathcal{H}$ 
	commute with left convolution, $\mathcal{L}(\eta_k* f)=\eta_k*
	\mathcal{L} f$ converges to $\mathcal{L} f$ in $ L^1_\loc$ and $
	\nabla_\mathcal{H}(\eta_k* f)=\eta_k*\nabla_\mathcal{H} f$ 
	converges to $\nabla_\mathcal{H} f$ in $L^2_\loc$. The conclusion 
	follows by the continuity of the pairings. This proves case \rmi.
	\par
	Case \rmii\ may be reduced to case \rmi, by multiplying $f$ by a function $\phi\in C^\infty_c$ such that $\phi=1$ in a neighbourhood of the support of $g$ and observing that $f\phi, g\in S^2_{1}(w)$, $\mathcal{L}^w_1(f\phi)=\phi\mathcal{L}^w_1f+f\mathcal{L}^w_1\phi-2\nabla_\mathcal{H} \phi\cdot\nabla_\mathcal{H} f\in L^2(w)$, since $\nabla_\mathcal{H} w\in L^\infty_\loc$, and
	\[
	(\mathcal{L}_1^w f, g)_{L^2(w)}=(\mathcal{L}_1^w (f\phi), g)_{L^2(w)}=(\nabla_\Hc (f\phi), \nabla_\Hc g)_{L^2(w)}=(\nabla_\Hc f, \nabla_\Hc g)_{L^2(w)}.
	\] \end{proof}

\begin{lemma}\label{Bootstrap1}
	Let $w$ be an admissible weight and suppose that $f\in \Dom((\mathcal{L}^w_0)^*)$ satisfies $(\mathcal{L}^w_0)^* f+f=0$. Then
	\begin{itemize}
		\item[\rmi] if $\nabla_\Hc w\in L^\infty_\loc$ then $f\in S^{2}_{1,\loc}$;
		\item[\rmii] if, furthermore, $\mathcal{L} w\in L^Q_\loc$ then $f\in S^{2}_{2,\loc}$.
	\end{itemize}
\end{lemma}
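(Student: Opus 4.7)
The overall strategy is to extract from the adjoint equation $(\mathcal{L}_0^w)^*f+f=0$ a distributional PDE satisfied by $fw$, and then bootstrap the regularity of $f$ using Lemma~\ref{lemmainiz}, with a Folland--Stein Sobolev embedding calibrating the integrability in part~\rmii. First I would test the adjoint equation against arbitrary $\phi\in C_c^\infty$ in the $L^2(w)$ inner product, unfold $\mathcal{L}_0^w=\mathcal{L}-\frac{\nabla_\Hc w}{w}\cdot\nabla_\Hc$, and integrate by parts. Since $fw\in L^2_\loc$ and $f\nabla_\Hc w\in L^1_\loc$ by admissibility, this produces the distributional identity
\begin{equation}\label{eq:masterplan}
\mathcal{L}(fw)+\divv_\Hc(f\nabla_\Hc w)=-fw\quad\text{in }\mathcal{D}'(G).
\end{equation}

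For part~\rmi, the hypothesis $\nabla_\Hc w\in L^\infty_\loc$ upgrades $f\nabla_\Hc w$ to $L^2_\loc$. Applying Lemma~\ref{lemmainiz}\rmii\ componentwise yields $\divv_\Hc(f\nabla_\Hc w)\in S^2_{-1,\loc}$, and since $fw\in L^2_\loc\subset S^2_{-1,\loc}$, equation~\eqref{eq:masterplan} forces $\mathcal{L}(fw)\in S^2_{-1,\loc}$. Lemma~\ref{lemmainiz}\rmiii\ then promotes this to $fw\in S^2_{1,\loc}$. To transfer regularity from $fw$ to $f$, I would write $f=(fw)\cdot w^{-1}$: from admissibility and $\nabla_\Hc w\in L^\infty_\loc$ we get $\nabla_\Hc(w^{-1})=-\nabla_\Hc w/w^2\in L^\infty_\loc$, so the Leibniz rule gives
\[
\nabla_\Hc f=\frac{\nabla_\Hc(fw)}{w}-f\,\frac{\nabla_\Hc w}{w}\in L^2_\loc,
\]
whence $f\in S^2_{1,\loc}$.

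For part~\rmii, starting from $f\in S^2_{1,\loc}$ obtained in~\rmi, the Folland--Stein embedding $S^2_{1,\loc}\hookrightarrow L^{2Q/(Q-2)}_\loc$, available since $Q\ge 4$, combined with $\mathcal{L}w\in L^Q_\loc$ and H\"older's inequality with exponents $(2Q/(Q-2),Q)$, gives $f\,\mathcal{L}w\in L^2_\loc$. This is exactly the integrability required to make sense of the Leibniz expansions
\[
\divv_\Hc(f\nabla_\Hc w)=\nabla_\Hc f\cdot\nabla_\Hc w-f\,\mathcal{L}w,\qquad \mathcal{L}(fw)=w\,\mathcal{L}f+f\,\mathcal{L}w-2\nabla_\Hc f\cdot\nabla_\Hc w
\]
as identities in $\mathcal{D}'(G)$. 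Substituting both into~\eqref{eq:masterplan}, the $f\,\mathcal{L}w$ terms cancel and I obtain $w\,\mathcal{L}f=\nabla_\Hc f\cdot\nabla_\Hc w-fw$, i.e.\ $\mathcal{L}f=(\nabla_\Hc w/w)\cdot\nabla_\Hc f-f$, which lies in $L^2_\loc$ because $\nabla_\Hc w/w\in L^\infty_\loc$ and $\nabla_\Hc f\in L^2_\loc$. Lemma~\ref{lemmainiz}\rmi\ now yields $f\in S^2_{2,\loc}$.

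The main obstacle I anticipate is justifying the two Leibniz expansions rigorously when $w$ is only weakly differentiable and $\mathcal{L}w$ is merely a function. Following the convention stated before Lemma~\ref{lem:6}, this is handled by mollifying $w$ on the left, $w_\varepsilon=\eta_\varepsilon*w$ (left convolution commutes with each left-invariant $X_j$), applying the classical Leibniz rule to the smooth $w_\varepsilon$, and passing to the limit via the convergences $w_\varepsilon\to w$, $\nabla_\Hc w_\varepsilon\to\nabla_\Hc w$ and $\mathcal{L}w_\varepsilon\to\mathcal{L}w$ in the $L^p_\loc$ spaces that the hypotheses of~\rmi\ and~\rmii\ are calibrated to provide; the role of the exponent $Q$ is precisely to match H\"older against the Sobolev embedding so that $f\,\mathcal{L}w\in L^2_\loc$, closing the bootstrap.
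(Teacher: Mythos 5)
Your proof is correct and follows essentially the same route as the paper's: the same distributional identity $\mathcal{L}(fw)+\divv_\Hc(f\nabla_\Hc w)=-fw$, the same bootstrap through Lemma~\ref{lemmainiz}, and the same Folland--Stein embedding plus H\"older step to get $f\,\mathcal{L}w\in L^2_\loc$. The only divergence is at the end of part~\rmii: the paper first upgrades $fw$ to $S^2_{2,\loc}$ (so that $\mathcal{L}(fw)$ is an honest $L^2_\loc$ function) and only then expands $\mathcal{L}f=\mathcal{L}(fw\cdot w^{-1})$, whereas you substitute the Leibniz expansion of $\mathcal{L}(fw)$ into the master equation and cancel the $f\,\mathcal{L}w$ terms to land directly on $\mathcal{L}f=\frac{\nabla_\Hc w}{w}\cdot\nabla_\Hc f-f$. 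Your variant is slightly slicker (it recovers $\mathcal{L}^w_1f=-f$ in the same stroke, which the paper rederives separately in the proof of Theorem~\ref{teo}), but it is also the one place needing extra care: at that stage $\mathcal{L}f$ is still only an element of $S^2_{-1,\loc}$, so the term $w\,\mathcal{L}f$ is a product of a genuine distribution with a locally Lipschitz function; it is well defined by duality (pair against $w\phi$) and your mollification of $w$ does justify the expansion, since $\nabla_\Hc w_\varepsilon\to\nabla_\Hc w$ a.e.\ with local $L^\infty$ bounds, but you should make the definition of that product explicit rather than leave it implicit in the limit. The paper's ordering avoids this issue entirely because every product it writes is a product of functions.
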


\begin{proof}
	Since $\Dom((\mathcal{L}^w_0)^*) \subseteq L^2(w) \subseteq L^2_{\loc}$, we have that $f\in L^2_\loc$; hence, both $fw$ and $f\nabla_\Hc w$ belong to $ L^2_\loc$. Then, for every $\phi\in C^\infty_c$,
	\begin{equation*}
	\begin{split}
	\langle w(\mathcal{L}_0^w)^*f,\phi \rangle&=	( (\mathcal{L}_0^w)^*f,\phi )_{L^2(w)} \\&= ( f, \mathcal{L}_0^w \phi)_{L^2(w)}\\&= \left( f, \mathcal{L}\phi - \frac{\nabla_\mathcal{H} w}{w} \cdot \nabla_\mathcal{H} \phi \right)_{L^2(w)} \\&= \langle \mathcal{L}(f w),  \phi\rangle + \langle \divv_\Hc(f \nabla_\mathcal{H} w),\phi\rangle.
	\end{split}
	\end{equation*}
	By the arbitrariness of $\phi$, this implies that
	$$
	w(\mathcal{L}^w_0)^* f= \mathcal{L}(f w)+ \divv_\Hc(f \nabla_\Hc w).
	$$By Lemma~\ref{lemmainiz}, (ii), $\divv_\Hc(f\nabla_\mathcal{H} w) \in S^{2}_{-1,\loc}$ since $f\nabla_\mathcal{H} w\in L^2_\loc$. Therefore, 
	\begin{equation}\label{eqdistr2}
	\begin{split}
	\mathcal{L}(f w)&= w(\mathcal{L}^w_0)^* f-\divv_\Hc (f \nabla_\Hc w)\\&= -w f-\divv_\Hc (f \nabla_\Hc w)\in S^{2}_{-1,\loc}.
	\end{split}
	\end{equation}
	Hence, by Lemma~\ref{lemmainiz}, (iii), $f w \in S^{2}_{1,\loc}$. Thus
	\[\nabla_\Hc f = \nabla_\Hc \left(fw w^{-1}\right) = w^{-1}\nabla_\Hc (fw) - f \frac{\nabla_\Hc w}{w} \in L^2_\loc\]
	since both $w^{-1}$ and $\nabla_\Hc w $ belong to $L^\infty_\loc$. Hence $f\in S^{2}_{1,\loc}$.
	\par
	Next, assume that $\mathcal{L}w\in L^Q_\loc$. Since $f\in S^{2}_{1,\loc}$, by Lemma~\ref{lemmainiz}, (i) it is enough to prove that $\mathcal{L}f \in L^2_\loc$. First we prove that $\mathcal{L} (fw) \in L^2_\loc$.
	Recall that $S^2_1$ embeds continuously in $L^{2 Q/(Q-2)}$ (cf.~\cite[Theorem 4.17]{Folland}). Thus $f\in L^{2 Q/(Q-2)}_{\loc}$. By applying H\"older's inequality to the conjugate exponents $Q/2$ and $Q/(Q-2)$, we obtain that
	\begin{equation}\label{fLw}
	f\mathcal{L}w \in L^2_\loc.
	\end{equation}
	Thus, since $\nabla_\Hc f\in L^2_\loc$ and $\nabla_\Hc f \cdot \nabla_\Hc w \in L^2_\loc$, we get  that
	\begin{equation}\label{eqdivv}
	\divv_\Hc(f\nabla_\mathcal{H} w) = \nabla_\mathcal{H} f \cdot \nabla_\mathcal{H} w - f \mathcal{L}w\in L^2_\loc.
	\end{equation}
	Thus, by~\eqref{eqdistr2} 
	\[\mathcal{L}(f w) = -(\divv_\Hc(f\nabla_\mathcal{H} w) + w f) \in L^2_{\loc}. \]
	Since $\nabla_\Hc (f w)\in L^2_\loc$, by Lemma~\ref{lemmainiz}, (i) we get that $f w \in S^{2}_{2,\loc}$.
	Therefore, we can write
	\begin{equation*}
	\begin{split}
	\mathcal{L}f &= \mathcal{L}\left(f ww^{-1}\right) = \mathcal{L}(f w) w^{-1} + fw \mathcal{L}\left(w^{-1}\right) + 2\nabla_\mathcal{H}(fw)\cdot \nabla_\mathcal{H}\left(w^{-1}\right).
	\end{split}
	\end{equation*}
	The first and the third term are in $L^2_\loc$ for what we showed above. As for the second term, since $\mathcal{L}w\in L^Q_\loc$,
	\[
	\begin{split}
	f w \mathcal{L}\left(w^{-1}\right)& = f w \frac{\mathcal{L}w}{w^2} - 2f w \frac{\abs{\nabla_\mathcal{H} w}^2}{w^3}\\& =f\frac{\mathcal{L}w}{w}- 2 f\frac{\abs{\nabla_\mathcal{H} w}^2}{w^2} 
	\end{split}
	\]
	Thus also the third term is in $L^2_\loc$, since $f\mathcal{L}w\in L^2_\loc$ by \eqref{fLw} and $w$, $\nabla_\mathcal{H}w$ are in $L^\infty_\loc$. This concludes the proof.
\end{proof}

\begin{proof}[Proof of Theorem \ref{teo}]
	We begin by proving that
	\begin{equation}\label{caratt}
	\Dom(\mathcal{L}^w) =\left\{ f\in \Dom(\mathfrak{t}) \colon \, \mathcal{L}^{w}_1 f\in L^2(w)\right\}, \quad \mathcal{L}^w f = \mathcal{L}^w_1 f \quad \forall f\in \Dom(\mathcal{L}^w).
	\end{equation}
	The inclusion $\supseteq$ follows from Lemma~\ref{lem:6}. Assume then $f\in\Dom(\mathcal{L}^w)$, and take $\phi\in C_c^\infty$. Then $w^{-1}\phi\in \Dom(\mathfrak{t})$, so that
	\[
	\begin{split}
	\langle \mathcal{L}^w f,\phi\rangle&=\left( \mathcal{L}^w f,
	w^{-1}\phi\right)_{L^2(w)}\\& =\mathfrak{t}\left(f,w^{-1}\phi\right) \\ &=\langle \nabla_\Hc f, \nabla_\Hc\phi-w^{-1}\phi\nabla_\Hc w\rangle\\
	&= \left\langle \mathcal{L} f, \phi\right\rangle-\left\langle \frac{\nabla_\Hc w}{w}\cdot \nabla_\Hc  f, \phi\right\rangle,
	\end{split}
	\]
	and hence $\mathcal{L}^w f=\mathcal{L}^w_1 f$. Then~\eqref{caratt} holds, and it is now easy to see that $C_c^\infty\subseteq \Dom(\mathcal{L}^w)$; thus, $\mathcal{L}^w$ is a self-adjoint extension of $\mathcal{L}^w_0$. 
	
	\smallskip
	
	\textbf{Case (a).} Assume that $\nabla_\Hc w\in L^\infty_\loc$ and $\mathcal{L}w\in L^Q_\loc$. By~\cite[Theorem X.1]{ReedSimonII} it suffices to prove that if $f\in \Dom ((\mathcal{L}^w_0)^*)$ satisfies $(\mathcal{L}^w_0)^* f+f=0$, then $f=0$.  By Lemma \ref{Bootstrap1} $f\in S^{2}_{2,\loc}$. Thus
	\[\nabla_\Hc(f w)=w\nabla_\Hc f+f \nabla_\Hc w,\qquad \divv_\Hc(w\nabla_\Hc f)= -w\mathcal{L} f+\nabla_\Hc w  \cdot \nabla_\Hc f\]
	are in $L^2_\loc$. Therefore, for every $\phi\in C^\infty_c$,
	\[
	\begin{split}
	((\mathcal{L}_0^w)^* f, \phi)_{L^2(w)}&=(f, \mathcal{L}_0^w \phi)_{L^2(w)}\\
	&=\langle f w, \mathcal{L}\phi\rangle-\langle f\nabla_\Hc w, \nabla_\Hc \phi\rangle\\
	&=\langle \nabla_\Hc (f w), \nabla_\Hc \phi\rangle-\langle f \nabla_\Hc w, \nabla_\Hc \phi\rangle\\
	&=\langle w\nabla_\Hc f, \nabla_\Hc \phi\rangle\\
	&=-\langle \divv_\Hc (w \nabla_\Hc f), \phi\rangle\\
	&= \langle w\mathcal{L} f-\nabla_\Hc w \nabla_\Hc f, \phi\rangle\\
	&=\langle \mathcal{L}_1^w f, w \phi\rangle\\
	&= (\mathcal{L}_1^w f, \phi)_{L^2(w)}.
	\end{split}
	\]
	Hence, \[\mathcal{L}_1^w f=(\mathcal{L}_0^w)^* f\in L^2(w).\]
	We are now able to argue as Strichartz~\cite{Strichartz}. Take an approximate unity $(\psi_k)$ for $G$ as in Remark~\ref{rem1}. Hence $\psi_k^2 f \in S^2_{1}(w)$ and
	\[
	\begin{split}
	0\geq -( \psi_k^2 f,f)_{L^2(w)} &= ( \psi_k^2 f, (\mathcal{L}_0^w)^*f )_{L^2(w)}\\
	& = ( \psi_k^2 f, \mathcal{L}_1^w f)_{L^2(w)}\\&= ( \nabla_\mathcal{H} (\psi_k^2 f), \nabla_\mathcal{H} f )_{L^2(w)}\\
	&= ( 2\psi_k (\nabla_\mathcal{H} \psi_k) f, \nabla_\mathcal{H} f )_{L^2(w)} + ( \psi_k^2 \nabla_\mathcal{H} f, \nabla_\mathcal{H} f)_{L^2(w)}. 
	\end{split}
	\]
	where the third equality holds by Lemma \ref{lem:6} \rmi. Therefore,
	\[
	\begin{split}
	\norm{\psi_k \nabla_\mathcal{H} f}^2_{L^2(w)} &=\langle \psi_k^2 \nabla_\mathcal{H} f, \nabla_\mathcal{H} f\rangle_{L^2(w)}\\
	&\leq-\langle 2\psi_k (\nabla_\mathcal{H} \psi_k) f, \nabla_\mathcal{H} f \rangle_{L^2(w)}\leq 2\norm{\psi_k \nabla_\mathcal{H} f}_{L^2(w)} \norm{f \nabla_\mathcal{H} \psi_k }_{L^2(w)} ,
	\end{split}
	\]
	so that
	\[
	\begin{split}
	\norm{\psi_k \nabla_\mathcal{H} f}_{L^2(w)}\leq 2\norm{\nabla_\mathcal{H} \psi_k}_{L^\infty(w)} \norm{f}_{L^2(w)}.
	\end{split}
	\]
	By Fatou's lemma, finally,
	\[
	\begin{split}
	\norm{\nabla_\mathcal{H} f}_{L^2(w)}&\leq\liminf_{k\to\infty} \norm{\psi_k\nabla_\mathcal{H} f}_{L^2(w)}
	\leq\lim_{k\to\infty} \frac{2}{k}\norm{\nabla_\mathcal{H} \psi_1}_{L^\infty(w)} \norm{f}_{L^2(w)}=0,
	\end{split}
	\]
	whence $\nabla_\mathcal{H} f =0$. Therefore, $f=0$ and Case \textbf{(a)} is proved.

	\smallskip	
	
	\textbf{Case (b).} Assume that $\frac{\nabla_\Hc w}{(1+N)w}\in L^\infty$.  To prove that $\mathcal{L}^w_0$ is essentially self-adjoint it suffices to show that $C_c^\infty$ is dense in $\Dom(\mathcal{L}^w)$. First we show that the space of functions in $\Dom(\mathcal{L}^w)$ with compact support is dense in $\Dom(\mathcal{L}^w)$.
	
	Let $f\in\Dom(\mathcal{L}^w)$ and take an approximate unity $(\psi_k)$ as in Remark~\eqref{rem1}. Then $\psi_kf\in\Dom(\mathcal{L}^w)$ for all $k$ by \eqref{caratt}, since
	\begin{equation}\label{eqdramm}
	\mathcal{L}^w_1 (\psi_k f)=(\mathcal{L}^w_0 \psi_k) f+\psi_k(\mathcal{L}^w_1 f)-2\nabla_\Hc \psi_k\cdot\nabla_\Hc f,
	\end{equation}	
	is in $L^2(w)$. Moreover $\mathcal{L}^w_1(\psi_k f)$ tends to $\mathcal{L}^w_1 f$ 
	in $L^2$. Indeed, 
	$
	\lim_k \psi_k(\mathcal{L}^w_1 f)=
	\mathcal{L}^w_1 f
	$ 
	in $L^2(w)$ by dominated convergence and $
	\lim_k \norm{\nabla_\Hc \psi_k\cdot\nabla_\Hc f}_{L^2(w)}=0$ since $
	\norm{\nabla_\mathcal{H}\psi_k}_{\infty}\le C/k$.   Thus it remains only to prove that 
	$$
	(\mathcal{L}^w_0 \psi_k) f=(\mathcal{L}\psi_k)f-w^{-1}{\naH w}\cdot
	\naH \psi_k \ f
	$$
	tends to zero in $L^2(w)$. Indeed, on the one hand, $\norm{\mc{L}\psi_k}_{\infty}\le Ck^{-2}$. On the other hand, since  the support 
	of $\naH{\psi_k}$ is contained in $A_k=\{x: k\le N(x)\le 2k\}$, we 
	have that $k\sim (1+N)$ on the support of $\naH{\psi_k}$. Thus $\norm{w^{-1}\naH{w}\cdot\naH{\psi_k}\ f}_{L^2(w)}\le C\norm{f}
	_{L^2(A_k,w)}$, which tends to zero.
	
	It remains only to prove that if $f \in \Dom(\mathcal{L}^w)$ has 
	compact support then there exists a sequence $(f_k)$ of functions in  
	$C^\infty_c$ such that $\lim_k f_k=f$ and $\lim_k \mc{L}^w_0f_k=
	\mc{L}^wf$ in $L^2(w)$. To this end, define $f_k=\eta_k*f$, where $(\eta_k)$ is an approximate identity for the convolution in $C^\infty_c$ and observe that
	\[
	\begin{split}
	\mathcal{L}^w_0(\eta_k*f)&=\mathcal{L} (\eta_k*f)-\frac{\nabla_\Hc w}{w}\cdot \nabla_\Hc(\eta_k*f) 
	\\& =\eta_k*\mathcal{L} f-\frac{\nabla_\Hc w}{w} \cdot \eta_k* (\nabla_\Hc f).
	\end{split}
	\]
	The conclusion follows, since for functions supported in a fixed compact set convergence in $L^2$ is equivalent to convergence in $L^2(w)$. This completes the proof.
\end{proof}

The following corollary specializes the result of Theorem~\ref{teo} to weights of the form  $w_\alpha= e^{-N^\alpha}$, where $N$ is any homogeneous norm on  $G$ and  $\alpha>0$. 

\begin{corollary}\label{essselfadj:walpha}
	$\mathcal{L}^{w_\alpha}$ is a self-adjoint extension of  $\mathcal{L}^{w_\alpha}_0$ for every $\alpha>0$. When $\alpha \geq 1$, $\mathcal{L}^{w_\alpha}$ is the unique self-adjoint extension of  $\mathcal{L}^{w_\alpha}_0$.
\end{corollary}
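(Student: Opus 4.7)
The plan is to invoke Theorem~\ref{teo}: existence of a self-adjoint extension will follow from verifying that $w_\alpha$ is an admissible weight, while uniqueness for $\alpha \geq 1$ will follow by checking one of the sufficient conditions (a), (b) of Theorem~\ref{teo}. I would split the uniqueness argument into the cases $\alpha = 1$ and $\alpha > 1$, using (b) and (a) respectively.

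First, for every $\alpha > 0$, I would show $w_\alpha$ is admissible. Positivity and $w_\alpha, w_\alpha^{-1} = e^{\pm N^\alpha} \in L^\infty_\loc$ are immediate from the continuity of $N$ and its vanishing only at $e$. On $G^*$ the chain rule gives
\[
\nabla_\Hc w_\alpha = -\alpha\, N^{\alpha-1}(\nabla_\Hc N)\, w_\alpha,
\]
and since $\nabla_\Hc N$ is $\delta_r$-homogeneous of degree $0$ it is bounded on $G^*$, so polar coordinates on $G$ yield
\[
\int_{N \leq R} |\nabla_\Hc w_\alpha|^2\, \dd y \lesssim \int_0^R r^{2\alpha + Q - 3}\, \dd r < +\infty
\]
for every $R > 0$, since $Q \geq 4$ and $\alpha > 0$; one checks by a standard cutoff argument that this pointwise derivative coincides with the distributional one. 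Thus $w_\alpha$ is admissible, and the first part of Theorem~\ref{teo} gives that $\mathcal{L}^{w_\alpha}$ is a positive self-adjoint extension of $\mathcal{L}^{w_\alpha}_0$ for every $\alpha > 0$.

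For essential self-adjointness when $\alpha = 1$ I would verify condition (b): the identity
\[
\frac{\nabla_\Hc w_1}{(1+N)w_1} = -\frac{\nabla_\Hc N}{1+N}
\]
together with the global boundedness of $|\nabla_\Hc N|$ shows the left-hand side lies in $L^\infty$. For $\alpha > 1$ I would instead verify condition (a). The formula for $\nabla_\Hc w_\alpha$ shows it is continuous across $e$ (as $\alpha - 1 > 0$), hence in $L^\infty_\loc$. A direct computation gives
\[
\mathcal{L} w_\alpha = \bigl[\alpha(\alpha-1) N^{\alpha-2}|\nabla_\Hc N|^2 \;-\; \alpha N^{\alpha-1}\, \mathcal{L} N \;-\; \alpha^2 N^{2(\alpha-1)}|\nabla_\Hc N|^2\bigr]\, w_\alpha,
\]
and since $\mathcal{L} N$ is homogeneous of degree $-1$ the dominant singularity at $e$ is of order $N^{\alpha-2}$. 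By polar coordinates, $|N^{\alpha-2}|^Q$ is locally integrable precisely when $Q(\alpha-2) + Q > 0$, i.e.\ $\alpha > 1$; away from $e$ the exponential decay of $w_\alpha$ crushes every polynomial factor. Hence $\mathcal{L} w_\alpha \in L^Q_\loc$, and condition (a) applies.

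The delicate point I foresee is the sharp local integrability analysis in the case $\alpha > 1$: one must use the smoothness of $N$ on $G^*$ to pin down the correct homogeneity degrees of $\nabla_\Hc N$ and $\mathcal{L} N$, and then use the polar decomposition on $G$ to identify the threshold as exactly $\alpha > 1$. This threshold is the precise reason why the endpoint case $\alpha = 1$ must be handled separately through condition (b).
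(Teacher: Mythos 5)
Your proposal is correct and follows essentially the same route as the paper: verify that $w_\alpha$ is an admissible weight via the explicit formulas for $\nabla_\Hc w_\alpha$ and $\mathcal{L}w_\alpha$ on $G^*$, then apply condition (a) of Theorem~\ref{teo} for $\alpha>1$ and condition (b) for $\alpha=1$ (you merely spell out the polar-coordinate integrability estimates that the paper leaves implicit). The only discrepancy is the sign of the $\alpha N^{\alpha-1}\mathcal{L}N$ term in your expression for $\mathcal{L}w_\alpha$ versus the paper's \eqref{Lwalpha}; with the convention $\mathcal{L}=-\sum_j X_j^2$ your sign is the right one, and in any case the sign is immaterial for the $L^Q_\loc$ conclusion.
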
	
\begin{proof} Outside the origin 
	\begin{equation}\label{nablawalpha}
	\nabla_\mathcal{H} w_\alpha = - \alpha w_\alpha  N^{\alpha -1} \nabla_\mathcal{H} N,
	\end{equation}
	and
	\begin{equation}\label{Lwalpha}
	\mathcal{L}w_\alpha= w_\alpha\left[-\alpha^2 N^{2\alpha-2}\abs{\nabla_\Hc N}^2 +\alpha(\alpha-1)N^{\alpha-2}\abs{\nabla_\Hc N}^2 + \alpha N^{\alpha-1} \mathcal{L}N\right].
	\end{equation}
	Since the right hand sides of both identities are in $L^1_\loc$, it is not hard to see that they coincide with $\nabla_\Hc w_\alpha$ and $\mathcal{L}w_\alpha$, respectively, in the sense of distributions on $G$. Thus the weight $w_\alpha$ is admissible for all $\alpha>0$. Moreover,
	\begin{itemize}
		\item[(a)] if $\alpha>1$ then $\naH{w_\alpha}\in L^\infty_\loc$ and $\mathcal{L}w_\alpha\in L^Q_\loc$;
		\item[(b)] $\naH{w_1}\in L^\infty_\loc$ and $w_1^{-1} \naH{w_1}\in L^\infty$.
	\end{itemize}
	The conclusion follows by Theorem~\ref{teo}.
\end{proof}

\section{Schr\"odinger operators}\label{SchrodingerSection}
Let $w$ be an admissible weight such that $\mathcal{L}w \in L^1_\loc$. Instead of dealing directly with the weighted operator $\mathcal{L}^{w}$, it is often more convenient to consider a Schr\"odinger operator unitarily equivalent to it. Indeed,  if $U\colon 
L^2(w) \rightarrow L^2$ is the unitary map defined by $U f\coloneqq 
f \sqrt{w}$, then $\mathcal{L}^w$ is unitarily equivalent to 
$U\mathcal{L}^w U^{-1}$ with domain $U\Dom(\mathcal{L}^w)$. If 
the weight $w$ is smooth then $U$ maps $C^\infty_c$ onto itself 
and a straightforward computation gives
\begin{equation}\label{conj}
U\mathcal{L}^w U^{-1} \phi =(\mathcal{L}+V)\phi \qquad\forall \phi\in C^\infty_c,
\end{equation}
where  
\begin{equation}\label{potential}
V= -\frac{1}{4}\frac{\abs*{\nabla_\mathcal{H}w}^2}{w^2} - \frac{1}{2}\frac{\mathcal{L}w}{w}.
\end{equation}
If the operator $\mathcal{L}+V$ is essentially self-adjoint on $C^\infty_c$ then the identity \eqref{conj} completely determines $\mathcal{L}^w$ and the study of the spectrum of $\mathcal{L}^w$ can be reduced to that of the closure of $\mathcal{L}+V$. If the weight $w$ is not smooth matters become more delicate because, on the one hand, the derivatives in \eqref{potential} must be interpreted in the sense of distributions and, on the other hand,  one must be careful in identifying the domains of the operators. 
The following lemma shows that the identity~\eqref{conj} still holds in the distributional sense if we assume sufficient regularity of the weight $w$.
\begin{lemma}\label{coniugio}
	If $\nabla_\Hc w\in L^\infty_\loc$ and $\mathcal{L}w \in L^2_\loc$, then $V\in L^2_\loc$ and 
	$$
	U^{-1}\phi\in\Dom(\mathcal{L}^w)
	,\qquad U\mathcal{L}^w U^{-1} \phi=\mathcal{L}\phi+V\phi \qquad\forall \phi\in C^\infty_c.
	$$ 
\end{lemma}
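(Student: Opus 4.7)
The plan is to verify the pointwise (distributional) identity $U\mathcal{L}^wU^{-1}\phi=\mathcal{L}\phi+V\phi$ by a direct product-rule calculation, and then appeal to the characterization of $\Dom(\mathcal{L}^w)$ in Theorem~\ref{teo} to upgrade it to an operator identity on $L^2(w)$.

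First, I would verify the local regularity assertion $V\in L^2_\loc$. Since $w,w^{-1}\in L^\infty_\loc$ (by admissibility) and $\nabla_\Hc w\in L^\infty_\loc$ by hypothesis, the term $|\nabla_\Hc w|^2/w^2$ lies in $L^\infty_\loc\subseteq L^2_\loc$; likewise, $(\mathcal{L}w)/w\in L^2_\loc$ because $\mathcal{L}w\in L^2_\loc$ and $w^{-1}\in L^\infty_\loc$. Hence $V\in L^2_\loc$.

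Next, fix $\phi\in C_c^\infty$ and set $f\coloneqq U^{-1}\phi=\phi\,w^{-1/2}$. I would check that $f\in S^2_1(w)$: since $\phi$ has compact support and $w^{\pm 1}\in L^\infty_\loc$, one has $\int|f|^2w\,\dd y=\int|\phi|^2\dd y<\infty$, and the distributional identity
\[
\nabla_\Hc f=w^{-1/2}\nabla_\Hc\phi-\tfrac{1}{2}\phi\,w^{-3/2}\nabla_\Hc w
\]
(valid because $w^{-1/2}$ is a locally Lipschitz function of $w$ and $\nabla_\Hc w\in L^\infty_\loc$) shows $\nabla_\Hc f\in L^2(w;\mathfrak g_1)$, again using that $\phi$ is compactly supported. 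Then, applying the Leibniz rules once more,
\[
\mathcal{L}f=w^{-1/2}\mathcal{L}\phi+\phi\,\mathcal{L}(w^{-1/2})-2\nabla_\Hc\phi\cdot\nabla_\Hc(w^{-1/2}),
\]
and the calculation
\[
\mathcal{L}(w^{-1/2})=-\tfrac{3}{4}w^{-5/2}|\nabla_\Hc w|^2-\tfrac{1}{2}w^{-3/2}\mathcal{L}w
\]
is justified distributionally because $\nabla_\Hc w\in L^\infty_\loc$ and $\mathcal{L}w\in L^2_\loc$. Substituting and combining with the term $-\frac{\nabla_\Hc w}{w}\cdot\nabla_\Hc f$, a routine cancellation of the cross terms $w^{-3/2}\nabla_\Hc\phi\cdot\nabla_\Hc w$ yields
\[
\mathcal{L}^w_1 f=\mathcal{L}f-\frac{\nabla_\Hc w}{w}\cdot\nabla_\Hc f=w^{-1/2}\bigl(\mathcal{L}\phi+V\phi\bigr)=U^{-1}(\mathcal{L}\phi+V\phi).
\]

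Finally, since $\mathcal{L}\phi+V\phi\in L^2$ (as $\mathcal{L}\phi\in C_c^\infty$ and $V\phi\in L^2$ by the first step), the right-hand side belongs to $L^2(w)$: indeed $\|U^{-1}(\mathcal{L}\phi+V\phi)\|_{L^2(w)}^2=\|\mathcal{L}\phi+V\phi\|_{L^2}^2<\infty$. By the characterization $\Dom(\mathcal{L}^w)=\{f\in S^2_1(w):\mathcal{L}^w_1 f\in L^2(w)\}$ proved in Theorem~\ref{teo}, we conclude $f\in\Dom(\mathcal{L}^w)$ and $\mathcal{L}^w f=\mathcal{L}^w_1 f$; applying $U$ yields the desired identity.

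The main obstacle I anticipate is the rigorous justification of the Leibniz and chain rules for $w^{-1/2}$ when $w$ is only weakly differentiable; this is handled either by the standard regularization (convolving $w$ on the left with an approximate identity, exploiting that all the factors appearing live in $L^p_\loc$ spaces for which convergence is stable under multiplication by the relevant locally bounded factors) or by invoking the convention, stated earlier in the excerpt, that the usual rules of derivation apply to products and compositions with Lipschitz functions. Once this is in place, the cancellation is purely algebraic and the verification of the membership of $\Dom(\mathcal{L}^w)$ follows at once from Theorem~\ref{teo}.
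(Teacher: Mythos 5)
Your proposal is correct and takes essentially the same route as the paper, whose proof consists precisely of the observation that a ``straightforward computation'' gives $U^{-1}\phi\in S^2_1(w)$ and $\mathcal{L}^w_1U^{-1}\phi\in L^2(w)$, followed by an appeal to the characterization of $\Dom(\mathcal{L}^w)$ in Theorem~\ref{teo}. You have simply written out that computation (the chain rule for $w^{-1/2}$, the cancellation of the cross terms, and the identification of $V$) in full, and your justification of the weak Leibniz/chain rules matches the convention the paper adopts.
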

\begin{proof}
	Let $\phi$ be a function in $C^\infty_c$. A straightforward computation shows that $U^{-1}\phi\in S^2_{1}(w)$ and $\mathcal{L}^w_1 U^{-1}\phi\in L^2(w)$. Thus $U^{-1}\phi \in \Dom(\mathcal{L}^w)$, by Theorem \ref{teo}.  The identity $U\mathcal{L}^w U^{-1} \phi=\mathcal{L}\phi+V\phi$  follows easily.
\end{proof}
The following theorem is an analogue of a classical result for the Laplacian on $\R^n$ (see Kato~\cite{Kato2}). It will be used in Proposition~\ref{uneq} to prove that, when the weight $w$ is sufficiently regular, the operator $\mathcal{L}^w$ is unitarily equivalent to the closure of $\mathcal{L}+V$ on $C^\infty_c$. 
\begin{theorem}\label{thmselfadj}
	Let $V\in L^2_\loc$, and suppose that $V$ is essentially bounded from below. Then $\mathcal{L}+V$, with domain $C_c^\infty$, is essentially self-adjoint on $L^2$. 
\end{theorem}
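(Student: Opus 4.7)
The plan is to adapt Kato's classical argument for the essential self-adjointness of $-\Delta+V$ on $\R^n$ with $V\in L^2_\loc$ bounded below (cf.~\cite[Theorem X.28]{ReedSimonII}) to the stratified setting, using Proposition~\ref{KatoSub} in place of the Euclidean distributional Kato inequality. First, since adding a real constant to $V$ preserves symmetry, domain and essential self-adjointness, the assumption that $V$ is essentially bounded below lets me reduce to the case $V\geq 0$ a.e. The operator $\mathcal{L}+V$ on $C_c^\infty$ is then symmetric and non-negative, so by the standard criterion it is essentially self-adjoint iff every $f\in L^2$ satisfying $(\mathcal{L}+V+1)f=0$ in the distributional sense vanishes.

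Fix such an $f$. Since $V\in L^2_\loc$ and $f\in L^2$, Cauchy--Schwarz gives $Vf\in L^1_\loc$, hence $\mathcal{L}f=-(V+1)f\in L^1_\loc$ as a distribution. Proposition~\ref{KatoSub} then yields
\[
\mathcal{L}|f|\leq \Re\bigl[\,\overline{\sign(f)}\,\mathcal{L}f\,\bigr]=-(V+1)|f|\leq -|f|,
\]
so $(\mathcal{L}+1)|f|\leq 0$ distributionally on $G$. To deduce $|f|=0$ I will test this inequality against suitable non-negative smooth functions.

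Recall that $\mathcal{L}$ is essentially self-adjoint on $C_c^\infty$, and that its closure $\bar{\mathcal{L}}$ generates a heat semigroup $e^{-t\bar{\mathcal{L}}}$ given by convolution with a non-negative kernel; hence the resolvent $(1+\bar{\mathcal{L}})^{-1}=\int_0^\infty e^{-t}\,e^{-t\bar{\mathcal{L}}}\,dt$ is positivity preserving on $L^2$. Given any $\phi\in C_c^\infty$ with $\phi\geq 0$, set $\psi=(1+\bar{\mathcal{L}})^{-1}\phi$: then $\psi\geq 0$ a.e., $\psi\in\Dom(\bar{\mathcal{L}})\subseteq S^2_{1}$, and hypoellipticity of $1+\mathcal{L}$ makes $\psi$ smooth. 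Picking a dilated cutoff $\chi_R=\chi\circ\delta_{1/R}$ with $\chi\in C_c^\infty$, $0\leq\chi\leq 1$ and $\chi\equiv 1$ near $e$, the function $\chi_R\psi$ lies in $C_c^\infty$ and is non-negative, so testing the distributional inequality against it and expanding
\[
(1+\mathcal{L})(\chi_R\psi)=\chi_R\phi+\psi\,\mathcal{L}\chi_R-2\,\nabla_\Hc\chi_R\cdot\nabla_\Hc\psi
\]
produces $\int_G|f|(1+\mathcal{L})(\chi_R\psi)\,dy\leq 0$. For $R$ so large that $\chi_R\equiv 1$ on $\supp\phi$, the first summand integrates to $\int_G|f|\phi\,dy$; meanwhile the scaling estimates $\|\mathcal{L}\chi_R\|_\infty=O(R^{-2})$ and $\|\nabla_\Hc\chi_R\|_\infty=O(R^{-1})$, combined with $|f|\psi,\ |f|\,|\nabla_\Hc\psi|\in L^1$ (by Cauchy--Schwarz and $\psi\in S^2_{1}$), let dominated convergence kill the remaining two terms as $R\to\infty$. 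The outcome $\int_G|f|\phi\,dy\leq 0$ for every non-negative $\phi\in C_c^\infty$ forces $|f|=0$, completing the argument.

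The main obstacle will be precisely this last step, in which a distributional inequality must be tested against a function that is \emph{not} in $C_c^\infty$. The cutoff procedure succeeds only because $\psi$ inherits simultaneously smoothness (from hypoellipticity), $L^2$-integrability and finite horizontal Dirichlet energy from belonging to $\Dom(\bar{\mathcal{L}})$, and its non-negativity hinges on the positivity of the heat kernel of $\mathcal{L}$ on the stratified group $G$.
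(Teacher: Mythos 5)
Your proof is correct and follows essentially the same route as the paper: reduction to $V\ge 0$, the distributional Kato inequality of Proposition~\ref{KatoSub} to get $(\mathcal{L}+I)\abs{f}\le 0$, and then testing against $(\mathcal{L}+I)^{-1}\phi$ for $\phi\ge 0$, whose non-negativity comes from the positivity of the heat semigroup. The only (inessential) difference is in how the non-compactly-supported test function is handled: the paper approximates $g=(\mathcal{L}+I)^{-1}\psi$ in the graph norm of $\mathcal{L}$ by non-negative functions in $C_c^\infty$, whereas you multiply by a dilated cutoff and control the commutator terms directly via the scaling bounds $\norm{\mathcal{L}\chi_R}_\infty=O(R^{-2})$ and $\norm{\nabla_\Hc\chi_R}_\infty=O(R^{-1})$ — both work.
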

\begin{proof}
	Up to adding a constant, we can assume that $V\geq 0$. By~\cite[Theorem X.1]{ReedSimonII} it is enough to prove that, if $(\mathcal{L}+V)^*f + f=0$ for some $f\in \Dom((\mathcal{L}+V)^*)$, then $f=0$. This holds in particular if we show that, if $(\mathcal{L}+V)f + f=0$ in the sense of distributions for some $f\in L^2$, then $f=0$.
	
	Suppose, then,
	\begin{equation*}
	\mathcal{L}f + Vf +f=0.
	\end{equation*}
	Since $f\in L^2$ and $V\in L_{\loc}^2$, $V f\in L_{\loc}^1$.
	Moreover, $f\in L_{\loc}^1$ and hence $\mathcal{L}f=-(Vf + f)\in L_{\loc}^1$. Therefore, by Proposition~\ref{KatoSub}
	\begin{equation*}
	\begin{split}
	\mathcal{L}\abs*{f} \leq \Re\left[\overline{\sign (f)} \mathcal{L}f\right]=- \Re\left[ \overline{\sign( f)}(V+1)f\right]=-(V+1)\abs*{f},
	\end{split}
	\end{equation*}
	so that
	\begin{equation*}
	(\mathcal{L}+I)\abs*{f}\leq -V\abs*{f}\leq 0.
	\end{equation*}
	In particular, for every nonnegative $\phi \in C_c^\infty$,
	\begin{equation}
	\langle \abs*{f}, (\mathcal{L}+I)\phi \rangle= \langle (\mathcal{L}+I)|f|, \phi\rangle \leq 0.
	\label{eqparzter}
	\end{equation}
	More generally, if $g$ is a nonnegative function in $\Dom(\mathcal{L})$, by approximating $g$ with a sequence $(\phi_k)$ of nonnegative functions in $C_c^\infty$  which tends to $g$ in $\Dom(\mathcal{L})$\footnote{It suffices to choose $\phi_k= \psi_k(\eta_k* g)$, where $(\psi_k)$ is a nonnegative approximate unity and $(\eta_k)$ is a nonnegative approximate identity.}, we obtain that
	$$
	\langle \abs*{f}, (\mathcal{L}+I)g\rangle=\lim_{k\to\infty}\langle \abs*{f}, (\mathcal{L}+I)\phi_k\rangle \le 0.
	$$
In particular, if
	\begin{equation*}
	g=(\mathcal{L}+I)^{-1}\psi = \int_0^\infty e^{-s}e^{-s\mathcal{L}}\psi \,\dd s,
	\end{equation*}
	where $\psi$ is a nonnegative function in $C^\infty_c$, then $g\in\Dom(\mathcal{L})$ and is nonnegative, since the semigroup $e^{-s\mathcal{L}}$ is positivity preserving (cf.~\cite[Theorem 2.7]{Ouhabaz}).
	Thus
	\begin{equation*}
	\langle \abs*{f},\psi\rangle \leq 0 \quad \forall \psi\in C_c^\infty,\; \psi\geq 0.\
	\end{equation*}
	It follows that $f=0$, and this concludes the proof.
\end{proof}
\begin{notation}
When $T$ is an operator with domain $\Dom(T)$ and $\mathscr{D}\subseteq \Dom(T)$, we  denote by $(T,\mathscr{D})$ the restriction of $T$  to $\mathscr{D}$. 
\end{notation}

\begin{proposition}\label{uneq}
	Let $w$ be an admissible weight such that $\nabla_\Hc w \in L^\infty_\loc$ and $\mathcal{L}w \in L^2_\loc$. Suppose that $V$ in \eqref{potential} is bounded from below. Then $\mathcal{L}^w$ is unitarily equivalent to the closure of $(\mathcal{L}+V, C_c^\infty)$.
\end{proposition}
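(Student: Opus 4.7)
The plan is to chain together the three preparatory results already in place: (i) Lemma~\ref{coniugio}, which identifies $V \in L^2_\loc$ and verifies the intertwining relation $U\mathcal{L}^w U^{-1}\phi = (\mathcal{L}+V)\phi$ for every $\phi\in C_c^\infty$, together with the inclusion $U^{-1}\phi \in \Dom(\mathcal{L}^w)$; (ii) Theorem~\ref{thmselfadj}, which, since $V \in L^2_\loc$ is bounded below by hypothesis, shows that $(\mathcal{L}+V, C_c^\infty)$ is essentially self-adjoint on $L^2$; and (iii) Theorem~\ref{teo}, which guarantees that $\mathcal{L}^w$ is self-adjoint on $L^2(w)$.

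The execution is then a short abstract chase. Because $U\colon L^2(w) \to L^2$ is unitary and $\mathcal{L}^w$ is self-adjoint, the conjugate operator $T \coloneqq U\mathcal{L}^w U^{-1}$, with domain $U\Dom(\mathcal{L}^w)$, is self-adjoint on $L^2$. By Lemma~\ref{coniugio}, $C_c^\infty \subseteq \Dom(T)$ and $T\phi = \mathcal{L}\phi + V\phi$ for every $\phi \in C_c^\infty$. Hence $T$ is a self-adjoint extension of $(\mathcal{L}+V, C_c^\infty)$. By the essential self-adjointness obtained from Theorem~\ref{thmselfadj}, there is only one such extension, namely the closure of $(\mathcal{L}+V, C_c^\infty)$. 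Therefore $T = \overline{(\mathcal{L}+V, C_c^\infty)}$, which is the claimed unitary equivalence.

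I do not anticipate any substantive obstacle: the delicate points—interpreting the distributional identity \eqref{conj} for the non-smooth weight $w$, and ruling out non-trivial distributional solutions of $(\mathcal{L}+V)^*f + f = 0$ via the Kato-type inequality of Proposition~\ref{KatoSub}—have already been absorbed into Lemma~\ref{coniugio} and Theorem~\ref{thmselfadj}, respectively. The only thing to double-check is that the hypotheses $\nabla_\Hc w \in L^\infty_\loc$ and $\mathcal{L} w \in L^2_\loc$ suffice to place $U^{-1}C_c^\infty$ inside $\Dom(\mathcal{L}^w)$ via the characterization $\Dom(\mathcal{L}^w) = \{f \in S^2_1(w) : \mathcal{L}^w_1 f \in L^2(w)\}$ from Theorem~\ref{teo}; this is a routine distributional computation on a compactly supported function and is precisely what Lemma~\ref{coniugio} provides.
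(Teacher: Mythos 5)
Your argument is correct and coincides with the paper's own proof: both use Lemma~\ref{coniugio} to show that $(U\mathcal{L}^w U^{-1}, U\Dom(\mathcal{L}^w))$ is a self-adjoint extension of $(\mathcal{L}+V, C_c^\infty)$ and then invoke the essential self-adjointness from Theorem~\ref{thmselfadj} to identify it with the closure. No gaps.
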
	
\begin{proof} If $\phi$ is a function in $C^\infty_c$ then $U^{-1}\phi \in \Dom(\mathcal{L}^w)$, by Lemma~\ref{coniugio}. Hence $C^\infty_c\subset U\Dom(\mathcal{L}^w)$ and $(U\mathcal{L}^wU^{-1}, U\Dom(\mathcal{L}^w))$ is a self-adjoint extension of $(\mathcal{L}+V,C^\infty_c)$. Since the latter operator is essentially self-adjoint by Theorem \ref{thmselfadj},  $(U\mathcal{L}^wU^{-1}, U\Dom(\mathcal{L}^w))$ is the closure of $(\mathcal{L}+V,C^\infty_c)$. This proves the proposition.
\end{proof}

\section{Discreteness of the spectrum on Métivier groups}\label{sec:discr}
This section is based on a result of Simon~\cite{Simon1} on the discreteness of the spectrum of the Schr\"odinger operator $-\Delta+V$ on $\R^n$,  when the sublevel sets of the potential $V$ are \textit{polynomially thin}.  Simon's definition of polynomial thinness can be  adapted to the setting of stratified groups as follows. For a subset $E$ of $G$, we write $\abs{E}$ to denote its measure with respect to the measure $\dd y$.
\begin{definition} Given $\ell>0$, a set $\Omega \subseteq G$ is said to be \textit{$\ell$-polynomially thin} if
	\[\int_{\Omega} \abs{\Omega \cap B(y,r)}^{\ell} \,d y <\infty \]
	for every $r>0$. Here $B(y,r)$ is the ball induced by the left-invariant metric associated with any homogeneous norm on $G$.
\end{definition}
\begin{theorem}\label{polythin}
	Let $V$ be a potential bounded from below. Assume that for every $M>0$ there is $\ell>0$ such that $\Omega_{M}\coloneqq \{y\in G \colon \, V(y)\leq M\}$ is $\ell$-polynomially thin. Then there exists a self-adjoint extension of $(\mathcal{L}+V, C^\infty_c )$ with purely discrete spectrum.
\end{theorem}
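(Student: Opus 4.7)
The plan is to take the Friedrichs extension $H$ of $(\mathcal{L}+V,C_c^\infty)$---which exists as a self-adjoint extension because $V$ is bounded below---and show that $H$ has compact resolvent; this gives purely discrete spectrum. After shifting $V$ by a constant we may assume $V\ge 1$, so that the associated closed form $q(u)=\int_G|\nabla_\mathcal{H} u|^2\,\dd y+\int_G V|u|^2\,\dd y$ on its form domain $\mathcal{D}$ dominates both $\|u\|_2^2$ and $\|u\|_{S^2_1}^2$. Compactness of the embedding $\mathcal{D}\hookrightarrow L^2$ reduces, by the standard tightness argument, to two ingredients: the local Rellich--Kondrachov compactness $S^2_1(B)\hookrightarrow L^2(B)$ on bounded sets $B\subset G$ (classical on stratified groups), together with the uniform tail bound
\[
\int_{G\setminus B(e,R)}|u|^2\,\dd y\le\epsilon\,q(u)\qquad\forall u\in\mathcal{D},
\]
for every $\epsilon>0$ and some $R=R(\epsilon)$.

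The tail bound will be established by splitting at the sublevel set $\Omega_M$. On its complement, $V\ge M$, hence $\int_{(G\setminus B(e,R))\setminus\Omega_M}|u|^2\le M^{-1}q(u)\le \epsilon/2$ once $M\ge 2/\epsilon$. To handle $\Omega_M\cap(G\setminus B(e,R))$, fix $r>0$ and cover $G$ by balls $B_j=B(y_j,r/2)$ with bounded overlap, then pick cutoffs $\phi_j\in C_c^\infty(B(y_j,r))$ equal to $1$ on $B_j$ with $|\nabla_\mathcal{H}\phi_j|\le C/r$. H\"older with exponents $(Q/2,Q/(Q-2))$, followed by the Folland--Sobolev embedding $S^2_1\hookrightarrow L^{2Q/(Q-2)}$ applied to $\phi_j u$, yields
\[
\int_{\Omega_M\cap B_j}|u|^2\le C_r|\Omega_M\cap B_j|^{2/Q}\bigl(\|\nabla_\mathcal{H} u\|_{L^2(B(y_j,r))}^2+\|u\|_{L^2(B(y_j,r))}^2\bigr).
\]
Summing over those $j$ with $y_j\notin B(e,R)$ and absorbing the bounded overlap into a constant gives
\[
\int_{\Omega_M\cap(G\setminus B(e,R))}|u|^2\le C'_r\,\Bigl(\sup_{|y_j|\ge R-r}|\Omega_M\cap B_j|^{2/Q}\Bigr)\,q(u),
\]
so the tail estimate reduces to showing that this supremum tends to $0$ as $R\to\infty$.

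Polynomial thinness delivers exactly this. For any $y\in B(y_j,r/2)$ one has $B(y_j,r/2)\subseteq B(y,r)$, hence $|\Omega_M\cap B_j|\le|\Omega_M\cap B(y,r)|$; integrating over $y\in\Omega_M\cap B_j$ gives
\[
|\Omega_M\cap B_j|^{\ell+1}\le\int_{\Omega_M\cap B_j}|\Omega_M\cap B(y,r)|^\ell\,\dd y,
\]
and summing over $j$, the $\ell$-polynomial thinness of $\Omega_M$ together with bounded overlap forces $\sum_j|\Omega_M\cap B_j|^{\ell+1}<\infty$. The terms of a convergent series of nonnegative numbers tend to zero, so $|\Omega_M\cap B_j|\to 0$ as $|y_j|\to\infty$; choosing $R$ accordingly completes the tail estimate. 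The obstacle I anticipate is precisely the exponent mismatch between $2/Q$ coming from Sobolev--H\"older and $\ell+1$ coming from the polynomial thinness step; the workaround is not to match exponents, but rather to extract from the summability only the pointwise decay of $|\Omega_M\cap B_j|$, which is all the supremum bound above actually needs.
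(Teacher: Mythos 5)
Your proposal is correct, but it takes a genuinely different route from the paper's. The paper proves Theorem~\ref{polythin} by adapting the heat--kernel argument of \cite{Simon1}: there one shows, using the Gaussian upper bound for the heat kernel, that a sufficiently high power of the localized semigroup $\chi_{\Omega_M}e^{-t\mathcal{L}}\chi_{\Omega_M}$ is Hilbert--Schmidt (the power being dictated by $\ell$), deduces that $\chi_{\Omega_M}e^{-t\mathcal{L}}$ is compact, and then concludes with essentially the same splitting $\norm{u}_2^2\le\norm{\chi_{\Omega_M}u}_2^2+M^{-1}q(u)$ that you use. You instead prove compactness of the form-domain embedding directly, replacing the Schatten-class estimates by the Folland--Sobolev inequality and local Rellich--Kondrachov compactness. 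A notable feature of your argument is that polynomial thinness enters only through the qualitative consequence $\sup_{N(y_j)\ge R}\abs{\Omega_M\cap B_j}\to 0$ as $R\to\infty$; the exponent $\ell$ serves only to make $\sum_j\abs{\Omega_M\cap B_j}^{\ell+1}$ converge, so your proof in fact establishes the theorem under this formally weaker decay hypothesis, and the ``exponent mismatch'' you worry about is indeed a non-issue for exactly the reason you give. What Simon's route buys in exchange is quantitative Schatten-class information on the resolvent and independence from the subelliptic Sobolev embedding and local compactness, which on a stratified group require Folland's theory (though both are available, so your appeal to them is legitimate). Two harmless points to tidy up: since the homogeneous norm satisfies only a quasi-triangle inequality, the inclusion $B(y_j,r/2)\subseteq B(y,r)$ for $y\in B_j$ should read $B(y_j,r/2)\subseteq B(y,\gamma r)$, which is immaterial because thinness is assumed for every radius; and you should record explicitly that the constant $C'_r$ is independent of $M$, so that the quantifiers can be ordered $\epsilon\mapsto M\mapsto\ell\mapsto R$. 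Both proofs exhibit the Friedrichs extension as the self-adjoint extension with purely discrete spectrum.
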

The proof is a simple adaptation to stratified groups of ~\cite[Theorem 3]{Simon1}. Indeed, Simon's proof relies only on some properties of the heat kernel on $\R^n$ which are true also for the heat kernel on $G$.
This theorem holds on general stratified groups, but henceforth we shall restrict to the particular case where $G$ is a Métivier group.

\begin{definition}(cf.~\cite{Metivier})
Let $\mathfrak{g}$ be a two-step Lie algebra with centre $\mathfrak{z}$, and denote by $\mathfrak{h}$ a complement of $\mathfrak{z}$ in $\mathfrak{g}$. Let $G$ be the connected, simply connected Lie group associated with $\mathfrak{g}$. We say that $G$ is an H-type group in the sense of Métivier, or simply a Métivier group, if for every $\eta \in \mathfrak{z}^*$ the skew-symmetric bilinear form on $\mathfrak{h}$ \[B_\eta (X,Y) \coloneqq \eta([X,Y])\]
is non-degenerate whenever $\eta \neq 0$.
\end{definition}
From now on, $G$ will be a Métivier group. We shall endow $\mathfrak{g}$ with some inner product $(\,\cdot\, , \,\cdot\,)$ and choose $\mathfrak{h}\coloneqq \mathfrak{z}^\perp$, so that $\mathfrak{g} = \mathfrak{h} \oplus \mathfrak{z}$ is an orthogonal stratification~\eqref{stratification} of $\mathfrak{g}$ (cf.~\cite[Section 3.7]{Bonfiglioli}).

It is very convenient to realize $G$ as $\R^{2 n}\times \R^m$, for some $n,m\in \N$, via the exponential map. More precisely, we shall denote by $(x,t)$ the elements of $G$, where  $x\in \R^{2n}$ and $t \in \R^m$. We denote by  $(e_1,\dots,e_{2n})$ and $(u_1,\dots,u_m)$ the standard basis of $\R^{2 n}$ and $\R^{m}$ respectively. Under this identification, the Haar measure $d y$ is the Lebesgue measure. If we define, for $T\in \mathfrak{z}$, a map $J_T \colon \mathfrak{h} \to \mathfrak{h}$ such that 
\[(J_T X, Y)= (T,[X,Y]) \quad \forall \, X,Y\in \mathfrak{h},\]
then the exponential map identifies the maps $\{J_T \colon T\in \mathfrak{z}\}$ with $2n \times 2n$ skew symmetric matrices $\{J_t \colon t\in \R^m\}$ and the group law on $G$ is
\[
	(x,t)\cdot (x',t') = \left(x+x',t+t' + \frac{1}{2} \sum_{k=1}^m (J_{u_k}x,x') u_k\right)
	\]	
	for every $(x,t),(x',t')\in \R^{2 n}\times \R^m$. By definition of Métivier group, the map $J_t$ is non-degenerate whenever $t\neq 0$.

A basis of left-invariant vector fields for $\mathfrak{g}$ is
\[X_j = \partial_{x_j} + \frac{1}{2}\sum_{k=1}^{m}( J_{u_k} x,e_j) \partial_{t_k},\quad j=1,\dots,2n; \qquad T_k = \partial_{t_k}, \quad k=1,\dots,m.\]
In particular, $(X_j)_{1\leq j \leq 2n}$ is a basis for the first layer $\mathfrak{h}\cong \R^{2 n}$.

From now on, we fix the homogeneous norm
\[ N(x,t)=\left(\abs{x}^4+16\, \abs{t}^2\right)^{\sfrac{1}{4}} \]
and the  associated left invariant (quasi) distance $d$. A pseudo-triangle inequality for $N$ holds,
\[N((x,t)\cdot (\xi,\tau))\leq \gamma \left[N(x,t)+N(\xi,\tau)  \right]\]
for some $\gamma>0$ which depends on $N$. In the particular case when $G$ is an H-type group with respect to the inner product $(\,\cdot\,,\,\cdot\,)$, such norm $N$ is the norm appearing in the fundamental solution of the sub-Laplacian~\cite{Kaplan}.

Our main theorem is the following, and its proof will occupy the remainder of the paper.
\begin{theorem}\label{DSalpha}
	If $0<\alpha<1$ no self-adjoint extension of $\mathcal{L}^{w_\alpha}_0$ has purely discrete spectrum. If $1\le \alpha \le 2$, $\mathcal{L}^{w_\alpha}$ does not have purely discrete spectrum. If $\alpha>2$, the spectrum of $\mathcal{L}^{w_\alpha}$ is purely discrete.
\end{theorem}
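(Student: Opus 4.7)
The plan is to follow the roadmap sketched in the introduction to Section~\ref{sec:discr}: conjugate $\mathcal{L}^{w_\alpha}$ to the Schr\"odinger operator $\mathcal{L} + V_\alpha$ via the isometry $U$ of Section~\ref{SchrodingerSection}, then apply Theorem~\ref{polythin} when $\alpha > 2$ and, when $\alpha \le 2$, exhibit a bounded non-precompact sequence in $C_c^\infty$ ruling out discreteness for every self-adjoint extension. A preliminary step is to compute $V_\alpha$ via \eqref{potential}, \eqref{nablawalpha}, and \eqref{Lwalpha}:
\[
V_\alpha = \frac{\alpha^2}{4}N^{2\alpha-2}|\nabla_\Hc N|^2 - \frac{\alpha(\alpha-1)}{2}N^{\alpha-2}|\nabla_\Hc N|^2 - \frac{\alpha}{2}N^{\alpha-1}\mathcal{L}N.
\]
A direct calculation with the Kaplan norm $N$ on a M\'etivier group, relying on $(J_t x, x) = 0$ and on the comparability $c|t|^2|x|^2 \le |J_tx|^2 \le C|t|^2|x|^2$ (which extends from the H-type to the M\'etivier setting by compactness of the unit sphere in $\mathfrak{z}$), yields $|\nabla_\Hc N|^2 \asymp |x|^2/N^2$ and $|\mathcal{L}N| \lesssim |x|^2/N^3$, hence $V_\alpha \asymp |x|^2 N^{2\alpha-4}$ at infinity and $V_\alpha$ bounded below on $G$ whenever $\alpha > 2$.

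For the discreteness when $\alpha > 2$, I would verify the hypotheses of Proposition~\ref{uneq} ($\nabla_\Hc w_\alpha \in L^\infty_\loc$ and $\mathcal{L} w_\alpha \in L^Q_\loc \subseteq L^2_\loc$ for $\alpha > 1$, as in the proof of Corollary~\ref{essselfadj:walpha}, plus $V_\alpha$ bounded below) and reduce the problem to proving discreteness for the closure of $(\mathcal{L} + V_\alpha, C_c^\infty)$. The sublevel set $\Omega_M = \{V_\alpha \le M\}$ is asymptotically contained in the tube $\{|x| \lesssim (1 + |t|)^{-(\alpha-2)/2}\}$, which collapses onto the center as $|t| \to \infty$. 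Using the homogeneity of the Haar measure and the explicit description of Kaplan balls (in particular the fact that a ball of fixed radius centered far from the center has $t$-extension of order $r^2$), a volume estimate
\[
|\Omega_M \cap B(y, r)| \lesssim (1 + |t_y|)^{-n(\alpha-2)}
\]
holds up to $r$-dependent constants, and a Fubini-type argument reduces $\int_{\Omega_M}|\Omega_M \cap B(y, r)|^\ell\, dy$ to $\int_{\R^m}(1 + |t|)^{-n(\ell+1)(\alpha-2)}\,dt$, which is finite for $\ell$ large enough. Hence $\Omega_M$ is polynomially thin and Theorem~\ref{polythin} produces a self-adjoint extension with purely discrete spectrum; by Theorem~\ref{thmselfadj} this extension is unique and so coincides with $U\mathcal{L}^{w_\alpha}U^{-1}$, proving discreteness of $\mathcal{L}^{w_\alpha}$.

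For the non-discreteness when $\alpha \le 2$, the plan is to exploit the $t$-invariance of the horizontal vector fields. Fix a nontrivial $\phi \in C_c^\infty$ and a sequence $t_n \in \R^m$ with $|t_n| \to \infty$, chosen spread out enough that the translates $\phi_n(x, t) \coloneqq \phi(x, t - t_n)$ have pairwise disjoint supports. Since the coefficients of the $X_j$ do not depend on $t$, $\mathcal{L}\phi_n$ and $\nabla_\Hc\phi_n$ are themselves $t$-translates of $\mathcal{L}\phi$ and $\nabla_\Hc\phi$, while on $\supp(\phi_n)$ one has $N \sim |t_n|^{1/2}$ and $|\nabla_\Hc N| \lesssim |t_n|^{-1/2}$. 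Consequently the drift coefficient $\alpha N^{\alpha-1}|\nabla_\Hc N| \lesssim |t_n|^{(\alpha-2)/2}$ stays bounded in $n$ precisely when $\alpha \le 2$, and a direct estimate gives $\|\mathcal{L}^{w_\alpha}_0 \phi_n\|_{L^2(w_\alpha)} \le C\|\phi_n\|_{L^2(w_\alpha)}$ with $C$ independent of $n$. Normalising produces a pairwise orthogonal sequence $\tilde \phi_n \in C_c^\infty$, bounded in the graph norm of every self-adjoint extension $T$ of $\mathcal{L}^{w_\alpha}_0$ (since $T\tilde \phi_n = \mathcal{L}^{w_\alpha}_0 \tilde\phi_n$), with no Cauchy subsequence in $L^2(w_\alpha)$. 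Therefore the graph embedding of $T$ is not compact and $T$ cannot have purely discrete spectrum, which establishes both non-discreteness regimes of the theorem in one stroke. The main technical obstacle I anticipate is the volume estimate behind the polynomial thinness of $\Omega_M$: tracking how a Kaplan ball of fixed radius meets the shrinking tube around the center requires a careful description of the metric far from the origin, which is less transparent on general M\'etivier than on H-type groups.
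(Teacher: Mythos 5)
Your proposal is correct, and for $\alpha>2$ it reproduces the paper's argument almost step for step: the confinement of $\Omega_{\alpha,M}$ to a tube collapsing onto the centre, the volume bound $\abs{\Omega_{\alpha,M}\cap B((x,t),r)}\lesssim \abs{t}^{n(2-\alpha)}$ for $\abs{t}$ large (Lemma~\ref{lemmathin}, proved there by trapping the ball in a cylinder $C_R(0,t)$), polynomial thinness, Theorem~\ref{polythin}, and the identification of the resulting extension with $U\mathcal{L}^{w_\alpha}U^{-1}$ via essential self-adjointness (Theorem~\ref{thmselfadj} together with Proposition~\ref{uneq}). For $0<\alpha\le 2$ you take a genuinely (if mildly) different route. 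The paper conjugates to $\mathcal{L}+V_\alpha$ and runs the central-translation argument in the \emph{unweighted} $L^2$, where the translates $\psi_n$ all have the same norm and the only point to check is that $V_\alpha$ is bounded on the tube $\{\abs{x}\le 1,\,N\ge1\}$; since $w_\alpha$ fails to be smooth at the origin for $\alpha<2$, this forces the conjugation to be performed on $C_c^\infty(G^*)$ (Remark~\ref{alphale2}) and the non-compactness to be transferred back through $U_\alpha$ (Proposition~\ref{nonempty}). You instead estimate $\mathcal{L}^{w_\alpha}_0\phi_n$ directly in $L^2(w_\alpha)$, which neatly avoids the $G^*$ detour; what this costs you is that $\norm{\phi_n}_{L^2(w_\alpha)}$ decays like $e^{-\frac12 N(0,t_n)^\alpha}$, so your claimed bound $\norm{\mathcal{L}^{w_\alpha}_0\phi_n}_{L^2(w_\alpha)}\le C\norm{\phi_n}_{L^2(w_\alpha)}$ needs, in addition to the boundedness of the drift coefficient $N^{\alpha-1}\abs{\nabla_\Hc N}\asymp\abs{x}N^{\alpha-2}$ on the supports, that $w_\alpha$ be comparable from above and below \emph{uniformly in $n$} on $\supp\phi_n$, i.e.\ that the oscillation of $N^\alpha$ on $\{\abs{x}\le R,\ \abs{t-t_n}\le R'\}$ be $O(1)$. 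This is true because $N(x,t)^\alpha=(4\abs{t})^{\alpha/2}+O(\abs{t}^{\alpha/2-1})$ there and $\alpha\le2$, but it is a second, equally essential use of the hypothesis $\alpha\le 2$ and should be stated; with it supplied, the conclusion via non-compactness of the graph embedding is the same resolvent-compactness equivalence the paper invokes. Two harmless slips: your displayed $V_\alpha$ carries the wrong sign on the term $\frac{\alpha}{2}N^{\alpha-1}\mathcal{L}N$ for the convention $\mathcal{L}=-\sum_jX_j^2$ (the correct sign is $+$, as in Lemma~\ref{casononcpt}; the paper's own \eqref{Lwalpha} appears to contain the same sign typo), which is irrelevant since you only use $\abs{\mathcal{L}N}\lesssim\abs{x}^2/N^3$; and your thinness integral exponent $n(\ell+1)(\alpha-2)$ differs from the paper's $\ell n(\alpha-2)$ only because you also use the decay of the $x$-sections of $\Omega_{\alpha,M}$, which the paper bounds by a constant --- both give finiteness for $\ell$ large.
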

To prove Theorem~\ref{DSalpha} we reduce matters to studying the spectrum of the Schr\"odinger operator $\mathcal{L} + V_\alpha$
obtained by conjugating $\mathcal{L}^{w_\alpha}$ with the isometry $U_\alpha \colon L^2(w_\alpha)\ni f\mapsto f \sqrt{w_\alpha} \in L^2$, as explained in the previous section.
\begin{lemma}\label{casononcpt}
	Let $\mathcal{L} + V_\alpha$ be the Schr\"odinger operator associated with $\mathcal{L}^{w_\alpha}$. Then, for every $(x,t)\in G^*$,
\begin{equation}\label{potenzialealpha}
	 N^{2 \alpha-4} \abs{x}^2 \left(c_{\alpha,1} - \frac{c_{\alpha,2}}{N(x,t)^\alpha}\right) \leq
	V_\alpha(x,t)\leq N^{2 \alpha-4} \abs{x}^2 \left(c_{\alpha,3} - \frac{c_{\alpha,4}}{N(x,t)^\alpha}\right) 
\end{equation}
for some $c_{\alpha,1}, c_{\alpha,2}, c_{\alpha,3}>0$ and $c_{\alpha,4}\in\R$. If $\alpha\ge 2$, then $V_\alpha$ is bounded from below.
\end{lemma}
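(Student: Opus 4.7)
The strategy is to obtain an explicit expression for $V_\alpha$, identify the leading and sub-leading terms by homogeneity, and then invoke the M\'etivier non-degeneracy condition to control the quadratic form $x \mapsto |J_t x|^2$ uniformly.

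First, inserting the formulas~\eqref{nablawalpha} and~\eqref{Lwalpha} into the definition~\eqref{potential} and collecting the $\alpha^2 N^{2\alpha-2}|\nabla_\mathcal{H} N|^2$ contributions, an elementary cancellation gives
\[
V_\alpha \;=\; \frac{\alpha^2}{4}\, N^{2\alpha - 2} |\nabla_\mathcal{H} N|^2 \;-\; \frac{\alpha(\alpha-1)}{2}\, N^{\alpha-2} |\nabla_\mathcal{H} N|^2 \;-\; \frac{\alpha}{2}\, N^{\alpha - 1}\, \mathcal{L} N.
\]
The first term has homogeneous degree $2\alpha - 2$, while the remaining two have degree $\alpha - 2$; after factoring out $N^{2\alpha-4}|x|^2$, the two sub-leading terms will contribute an $O(1/N^\alpha)$ correction, matching the shape of \eqref{potenzialealpha}.

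Second, I would compute $|\nabla_\mathcal{H} N|^2$ and $\mathcal{L} N$ in the coordinates $(x,t)$. Applying $X_j = \partial_{x_j} + \frac{1}{2}\sum_k (J_{u_k}x,e_j)\partial_{t_k}$ to $N = (|x|^4 + 16|t|^2)^{1/4}$ and exploiting the skew-symmetry of $J_t$ (so that $(x,J_t x)=0$), a routine calculation yields
\[
|\nabla_\mathcal{H} N|^2 \;=\; \frac{|x|^6 + 16|J_t x|^2}{N^6},
\]
together with an analogous formula for $\mathcal{L} N$ as a linear combination of $N^{-3}|x|^2$, $N^{-3}\sum_k |J_{u_k}x|^2$, and $N^{-7}(|J_t x|^2 - |x|^2|t|^2)$. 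The crucial analytic input is the two-sided bound
\[
c_0\, |t||x| \;\le\; |J_t x| \;\le\; C\, |t||x|,
\]
for positive constants $c_0, C$. The upper bound is immediate from the linearity $J_t = \sum_k t_k J_{u_k}$ and Cauchy--Schwarz. The lower bound is where the M\'etivier hypothesis enters: $J_t$ is invertible for every $t \neq 0$ and depends continuously on $t$, so a compactness argument on the unit sphere $\{|t|=1\}$ gives $\inf_{|t|=1}\inf_{|x|=1}|J_t x| > 0$, and the inequality then follows by bilinearity.

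Third, combining these bounds produces the comparison
\[
\min(1, c_0^2)\, N^4 |x|^2 \;\le\; |x|^6 + 16|J_t x|^2 \;\le\; \max(1, C^2)\, N^4 |x|^2,
\]
so that $N^{2\alpha - 2}|\nabla_\mathcal{H} N|^2 \in [c_1, c_2] \cdot N^{2\alpha-4}|x|^2$ with positive constants $c_1, c_2$. Crude upper bounds on $\sum_k |J_{u_k}x|^2 \le C'|x|^2$ and on $||J_t x|^2 - |x|^2|t|^2| \lesssim N^4|x|^2$ yield $|\mathcal{L} N| \lesssim N^{-3}|x|^2$, from which the lower-order terms in $V_\alpha$ are seen to contribute at most $O(N^{2\alpha-4}|x|^2 / N^\alpha)$; this gives \eqref{potenzialealpha}. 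For the final assertion, when $\alpha \ge 2$ the leading factor $N^{2\alpha-4}|x|^2$ is nonnegative; at infinity the positive term $c_{\alpha,1}N^{2\alpha-4}|x|^2$ dominates, while near the origin the correction $c_{\alpha,2}N^{2\alpha-4}|x|^2 / N^\alpha$ is bounded by $c_{\alpha,2}N^{\alpha-2}$ upon using $|x|\le N$, and this stays bounded precisely because $\alpha \ge 2$. The main obstacle is truly the compactness/continuity argument behind $c_0 > 0$: without the M\'etivier hypothesis, $|J_t x|$ could vanish along certain directions and destroy the positive lower bound on $|\nabla_\mathcal{H} N|^2$ relative to $N^{-2}|x|^2$.
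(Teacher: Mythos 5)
Your proposal follows essentially the same route as the paper's proof: the same explicit formula for $V_\alpha$ in terms of $|\nabla_\mathcal{H}N|^2$ and $\mathcal{L}N$, the same computation giving $|\nabla_\mathcal{H}N|^2 = N^{-6}\left(|x|^6+16|J_tx|^2\right)$, and the same compactness argument on $\{|x|=|t|=1\}$ exploiting the M\'etivier non-degeneracy to obtain $c_0|t|\,|x|\le |J_tx|\le C|t|\,|x|$, followed by the identical absorption of the sub-leading terms into the $O(N^{-\alpha})$ correction. The only slip is the sign of the $N^{\alpha-1}\mathcal{L}N$ term in your expression for $V_\alpha$ (with the paper's convention $\mathcal{L}=-\sum_j X_j^2$ it enters with a plus sign, as in the paper's proof, rather than the minus sign you wrote), but since this term only contributes to the $O(N^{-\alpha})$ correction it merely changes the values of $c_{\alpha,2}$ and $c_{\alpha,4}$ and does not affect the stated conclusion.
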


\begin{proof}
Observe first that
\begin{equation*}
\begin{split}
	V_\alpha&= -\frac{1}{4}\frac{\abs*{\nabla_\mathcal{H}w_\alpha}^2}{w_\alpha^2} - \frac{1}{2}\frac{\mathcal{L}w_\alpha}{w_\alpha}\\&= \frac{1}{4} \alpha^2 N^{2\alpha-2} \abs{\nabla_\Hc N}^2- \frac{1}{2}\alpha(\alpha-1)N^{\alpha-2}\abs{\nabla_\Hc N}^2 + \frac{1}{2}\alpha N^{\alpha-1} \mathcal{L}N
	\end{split}
\end{equation*}
on $G^*$. It is therefore enough to compute $\abs{\nabla_\Hc N}^2$ and $\mathcal{L} N$. Easy computations show that
\begin{equation*}
\abs*{\nabla_\Hc N}^2(x,t)= 
\frac{\abs{x}^2}{N^6}\left( \abs{x}^4+16 \abs{t}^2 \abs*{J_{\sign(t)} \sign(x) }^2 \right) ,
\end{equation*}
\[ \mathcal{L} N (x,t)= 
 \frac{3}{N}\abs{\nabla_\Hc N}^2-\frac{\abs{x}^2}{N^3}\left(2 + 2 n + 2\sum_{k=1}^m \abs*{J_{u_k} \sign(x)}^2\right). \]
Let $c_0$ and $C_0$ be the minimum and maximum, respectively, of $\abs{J_{t} x}^2$ as $\abs{x}=\abs{t}=1$. Since $J_t$ is non-degenerate for $t\neq 0$,
\begin{equation*}
	\min \{ \abs{J_{t}x} \colon x \in \R^{2n}, \, t\in \R^m, \, \abs{x}=\abs{t}=1 \}>0,
	\end{equation*}
hence $c_0>0$. Let $C\coloneqq \max(C_0, 1)$ and $c\coloneqq \min(c_0,1)$. Thus
 \[
 c \frac{\abs{x}^2}{N^2} \leq \abs{\nabla_\Hc N}^2\leq C\frac{\abs{x}^2}{N^2}
 \]
and
 \[
 \frac{\abs{x}^2}{N^3}(3 c- (2 n+2) - 2 m C_0)\leq \mathcal{L} N\leq \frac{\abs{x}^2}{N^3}(3  C- (2 n+2) -2 m c_0).
 \]
Therefore~\eqref{potenzialealpha} holds with
\[c_{\alpha,1}=\frac{c \alpha^2}{4}, \quad c_{\alpha,2}= \frac{C\alpha^2}{2} -\frac{\alpha}{2}(4 c- 2n-2- 2 m C_0  ),\]
\[ c_{\alpha,3}=\frac{C \alpha^2}{4}\, \quad c_{\alpha,4}= \frac{c\alpha^2}{2} -\frac{\alpha}{2}(4 C- 2n-2- 2 m c_0  ) .  \]
Since $4c -2n-2 \leq 0$, it is easily seen that $c_{\alpha,2}>0$. The boundedness from below of $V_\alpha$ when $\alpha\ge2$  is a straightforward consequence of \eqref{potenzialealpha}.
\end{proof}

\begin{remark}
When $J_t$ is $\abs{t}$-times an isometry for every $t\in \R^m$ and hence $G$ is an H-type group, by~\eqref{potenzialealpha} we get
\[V_\alpha(x,t)=\frac{1}{4} \alpha^2 N(x,t)^{2\alpha -4}\abs{x}^2 - \frac{1}{2} \alpha (Q+\alpha -2) N(x,t)^{\alpha -4} \abs{x}^2\]
as already shown by Inglis~\cite{Inglis}.
\end{remark}

\begin{corollary} For every $\alpha\ge 2$, $(\mathcal{L}+V_\alpha, C_c^\infty)$ is essentially self-adjoint and
	$\mathcal{L}^{w_\alpha}$ is unitarily equivalent to the closure of $(\mathcal{L}+V_\alpha, C_c^\infty)$.
\end{corollary}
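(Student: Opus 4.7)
The plan is to apply Proposition~\ref{uneq} directly with $w = w_\alpha$ for $\alpha \geq 2$. This requires checking four hypotheses: admissibility of $w_\alpha$, the condition $\nabla_\Hc w_\alpha \in L^\infty_\loc$, the condition $\mathcal{L} w_\alpha \in L^2_\loc$, and boundedness from below of the associated potential $V_\alpha$ defined by \eqref{potential}.

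The first three conditions have essentially been verified already in the proof of Corollary~\ref{essselfadj:walpha}. Admissibility is explicitly noted there. From \eqref{nablawalpha} we read off $\nabla_\Hc w_\alpha = -\alpha w_\alpha N^{\alpha-1}\nabla_\Hc N$; since $|\nabla_\Hc N|$ is locally bounded and $\alpha \geq 2 > 1$, we get $\nabla_\Hc w_\alpha \in L^\infty_\loc$. From \eqref{Lwalpha} (together with the local boundedness of $|\nabla_\Hc N|^2$ and the control on $\mathcal{L}N$ of the form $|\mathcal{L}N| \lesssim |x|^2/N^3$ used in the proof of Lemma~\ref{casononcpt}) one obtains $\mathcal{L} w_\alpha \in L^Q_\loc$, as already observed in Corollary~\ref{essselfadj:walpha}; because $Q \geq 4 \geq 2$ and compact sets have finite Haar measure, Hölder's inequality gives $L^Q_\loc \subseteq L^2_\loc$, whence $\mathcal{L} w_\alpha \in L^2_\loc$.

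The fourth and final hypothesis is the lower bound on $V_\alpha$, which is precisely the last assertion of Lemma~\ref{casononcpt}, valid in the regime $\alpha \geq 2$. With all four hypotheses in place, Proposition~\ref{uneq} applies and yields both conclusions simultaneously: the closure of $(\mathcal{L}+V_\alpha, C_c^\infty)$ is self-adjoint (that is, the restriction to $C_c^\infty$ is essentially self-adjoint, which is the role played inside Proposition~\ref{uneq} by Theorem~\ref{thmselfadj}), and $\mathcal{L}^{w_\alpha}$ is unitarily equivalent to that closure via the isometry $U_\alpha$.

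I do not expect any real obstacle in this proof: the corollary is essentially a bookkeeping step in which one assembles Lemma~\ref{casononcpt}, the observations from Corollary~\ref{essselfadj:walpha}, and Proposition~\ref{uneq}. The only minor point deserving attention is the passage $L^Q_\loc \subseteq L^2_\loc$, but this is immediate from Hölder's inequality on bounded sets since $Q \geq 4$ on a non-commutative stratified group.
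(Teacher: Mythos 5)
Your proof is correct and follows exactly the paper's argument: verify $\nabla_\Hc w_\alpha \in L^\infty_\loc$ and $\mathcal{L}w_\alpha \in L^Q_\loc \subset L^2_\loc$ from \eqref{nablawalpha}--\eqref{Lwalpha}, invoke Lemma~\ref{casononcpt} for the lower bound on $V_\alpha$, and apply Proposition~\ref{uneq}. The only difference is that you spell out the H\"older inclusion $L^Q_\loc \subseteq L^2_\loc$, which the paper states without comment.
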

\begin{proof}
	If $\alpha\ge2$, then $\nabla_\Hc w_\alpha \in L^\infty_\loc$ and $\mathcal{L}w_\alpha \in L^Q_\loc \subset L^2_\loc$ by \eqref{nablawalpha} and \eqref{Lwalpha}, while $V_\alpha$ is bounded from below by Lemma~\ref{casononcpt}. The conclusion follows from Proposition~\ref{uneq}
\end{proof}	
\begin{remark}\label{alphale2}
	When $0<\alpha< 2$, $V_\alpha$ is not bounded from below,  and we do not know whether $(\mathcal{L}+V_\alpha, C^\infty_c)$ is essentially self-adjoint. Therefore we cannot conclude that $\mathcal{L}^{w_\alpha}$ is unitarily equivalent to the closure of $(\mathcal{L}+V_\alpha, C_c^\infty)$. However, since the weight $w_\alpha$ is smooth in $G^*$,  the operators $(\mathcal{L}^{w_\alpha},C^\infty_c(G^*))$ and  $(\mathcal{L}+V_\alpha, C_c^\infty(G^*))$ are unitarily equivalent. As we shall see this will suffice to show that no self-adjoint extension of $(\mathcal{L}^{w_\alpha},C^\infty_c(G^*))$ has discrete spectrum when $0<\alpha\le2$. 
\end{remark}

\begin{definition}
	Given $\alpha> 0$ and $M>0$, we set \[\Omega_{\alpha,M}\coloneqq\{(x,t)\in G \colon\ V_\alpha(x,t)\leq M\}.\]
\end{definition}
\begin{lemma}\label{lemmathin}
	Let $\alpha>2$. Then, for every $M>0$ and $r>0$ there exists $k=k(\alpha,M,r)$ such that, for every $(x,t) \in\Omega_{\alpha,M}$ with $|t|>k$,
\[\abs{\Omega_{\alpha,M} \cap B((x,t),r)} \leq C(\alpha,M,r) \, |t|^{n\left(2-\alpha\right)}. \]
\end{lemma}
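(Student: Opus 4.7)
The plan is to combine the lower bound on $V_\alpha$ from Lemma~\ref{casononcpt} with a left-translation change of variables that decouples the ball constraint from the sublevel-set constraint. Throughout, I set $(\xi,\tilde\tau)\coloneqq (x,t)^{-1}\cdot(x',t')\in \R^{2n}\times \R^m$, so that $\xi = x'-x$ and $\tilde\tau = t'-t - \tfrac{1}{2}\sum_{k=1}^{m}(J_{u_k}x,\xi)\,u_k$; this change has Jacobian $1$ by left-invariance of the Haar (i.e.\ Lebesgue) measure.

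First I would establish a clean lower bound on $V_\alpha$ for $|t|$ large. If $|t|\geq k_0(\alpha,M)$ is large enough, then $N(x,t)\geq 2|t|^{1/2}$ makes $c_{\alpha,2}/N^\alpha\leq c_{\alpha,1}/2$, so the lower bound in \eqref{potenzialealpha} yields
\[
V_\alpha(x,t)\geq \tfrac{c_{\alpha,1}}{2}\,N(x,t)^{2\alpha-4}|x|^2\geq C_\alpha\,|t|^{\alpha-2}|x|^2,
\]
where $C_\alpha>0$ crucially uses $\alpha-2>0$. Hence any $(x,t)\in \Omega_{\alpha,M}$ with $|t|\geq k_0$ satisfies $|x|\leq C_1(\alpha,M)\,|t|^{(2-\alpha)/2}$, a quantity that vanishes as $|t|\to \infty$.

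Next, for fixed such $(x,t)$, the ball condition $(x',t')\in B((x,t),r)$ reads $|\xi|^4+16|\tilde\tau|^2<r^4$, and so forces $|\xi|<r$ and $|\tilde\tau|<r^2/4$. Since $t'-t = \tilde\tau + \tfrac{1}{2}\sum_k(J_{u_k}x,\xi)u_k$, the skew-symmetry of $J_{u_k}$ together with $|x|\leq C_1|t|^{(2-\alpha)/2}$ give
\[
|t'-t|\leq \tfrac{r^2}{4} + C_2\,|x|\,|\xi| \leq \tfrac{r^2}{4}+C_1 C_2\,r\,|t|^{(2-\alpha)/2}\leq \tfrac{r^2}{2},
\]
provided $|t|\geq k_1(\alpha,M,r)$; so $|t|/2\leq |t'|\leq 2|t|$ uniformly on the ball. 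Choosing $k\coloneqq \max(2k_0,k_1)$ then guarantees $|t'|\geq k_0$, and the first-paragraph estimate applied at $(x',t')\in \Omega_{\alpha,M}$ yields $|x'|\leq C_1|t'|^{(2-\alpha)/2}\leq C_3(\alpha,M)|t|^{(2-\alpha)/2}$. Thus $\xi=x'-x$ is confined to the Euclidean ball in $\R^{2n}$ centred at $-x$ of radius $C_3|t|^{(2-\alpha)/2}$, and integrating over the resulting product region in $(\xi,\tilde\tau)$ coordinates yields
\[
\abs{\Omega_{\alpha,M}\cap B((x,t),r)}\leq \abs{\{|\tilde\tau|<r^2/4\}}\cdot\abs{\{|\xi+x|\leq C_3|t|^{(2-\alpha)/2}\}}\leq C(\alpha,M,r)\,|t|^{n(2-\alpha)}.
\]

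The main obstacle is the twist in the group law: in naive coordinates the ball $B((x,t),r)$ is tilted by a quadratic amount depending on $x$ and becomes entangled with the sublevel constraint $V_\alpha(x',t')\leq M$. The left-translation coordinates $(\xi,\tilde\tau)$ straighten both constraints into an almost-product region, after which the estimate is Euclidean. The remaining subtlety is that the sharp lower bound on $V_\alpha$ only becomes effective once $|t'|$ itself is large, which is why one must enforce $|t'|\asymp|t|$ uniformly on the ball; this is precisely what dictates the choice of $k$.
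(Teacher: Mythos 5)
Your proof is correct and follows essentially the same route as the paper's: both arguments combine the lower bound on $V_\alpha$ from Lemma~\ref{casononcpt} (which, for $|t'|$ large, confines the sublevel set to $|x'|\lesssim |t'|^{(2-\alpha)/2}$) with the fact that the ball $B((x,t),r)$ has $t$-extent of bounded measure, and conclude by a product/Fubini estimate giving the factor $|t|^{n(2-\alpha)}$. The only difference is technical: the paper tames the group twist by enlarging $B((x,t),r)$ to $B((0,t),R)$ via the quasi-triangle inequality and the uniform bound $|x|\le c(\alpha,M)$, then containing that ball in a Euclidean cylinder, whereas you work in exact left-translated coordinates and verify directly that the twist term is negligible; both devices work.
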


\begin{proof} Since $V_\alpha \to +\infty$ when $\abs{x}\to +\infty$, for every $M$ there exists $c=c(\alpha,M)>0$ such that 
\begin{equation}\label{cylinder}
\abs{x}\leq c(\alpha,M) \qquad\forall (x,t)\in\Omega_{\alpha,M}.
\end{equation}
Let $R=\gamma( r+c(\alpha,M))$.  Then for every $(x,t)\in\Omega_{\alpha,M}$ the ball $B((x,t),r)$ is contained in  $B((0,t),R)$ because the quasi-distance between the centres is $\abs{x}\le c(\alpha,M)$. Thus
\[
\Omega_{\alpha,M} \cap B((x,t),r) \subseteq \Omega_{\alpha,M} \cap B((0,t), R)\qquad\forall (x,t)\in \Omega_{\alpha,M}.
\]
Since  $B((0,t),R)$ is contained in the cylinder $C_R(0,t)=\{(\xi,\tau): \abs{\xi}\le R, \abs{\tau-t}\le R^2\}$, it suffices to estimate the measure of $\Omega_{\alpha,M} \cap C_R(0,t)$. If $k>R^2$, then for all $\abs{t}>k$ and $(\xi,\tau)\in C_R(0,t)$
\[
	N(\xi,\tau) \geq \abs{\tau}^{1/2} \geq (\abs{t}-R^2)^{1/2} > (k-R^2)^{1/2}.
\]
Thus, if we choose $k$ sufficiently large, the quantity $\left(c_{\alpha,1} - \frac{c_{\alpha,2}}{N(\xi,\tau)^{\alpha}}\right)$ is bounded from below by a positive constant  $K=K(\alpha,R,k)$. Hence 
\begin{align*}
\abs*{\Omega_{\alpha,M}\cap C_R(0,t)}&\le \abs*{\left\{ (\xi,\tau)\colon \abs{\tau}^{\alpha - 2}\abs{\xi}^2 \leq \frac{M}{K},\; |\xi|\leq R,\; \abs{t-\tau}\leq R^2 \; \right\}} \\ 
&\le\int_{\abs{\tau-t}\le R^2}\int_{\abs{\xi}^2\le \frac{M}{K}\abs{\tau}^{2-\alpha}} d\xi\,d\tau  \\ 
&\lesssim \int_{B(t,R^2)}  \frac{1}{\abs{\tau}^{n(\alpha -2)}}\, d\tau \lesssim \abs{t}^{n(2-\alpha)}.
\end{align*}
This completes the proof.
\end{proof}

\begin{proposition}\label{nonempty}
	If $0<\alpha\le 2$, then no self-adjoint extension of $(\mathcal{L}+V_\alpha,C_c^\infty(G^*))$ has purely discrete spectrum. If $\alpha>2$, then the unique self-adjoint extension of $(\mathcal{L}+V_\alpha, C_c^\infty)$ has purely discrete spectrum.
\end{proposition}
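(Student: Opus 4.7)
The plan splits into two complementary regimes. For $\alpha>2$ I will verify the hypotheses of Theorem~\ref{polythin}; for $0<\alpha\le 2$ I will rule out compactness of the resolvent of every self-adjoint extension by exhibiting an $L^2$-normalised, weakly null sequence on which $\mathcal{L}+V_\alpha$ is uniformly bounded.

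\textbf{Case $\alpha>2$.} Lemma~\ref{casononcpt} gives that $V_\alpha$ is bounded from below, so it remains to verify that for every $M>0$ the sublevel set $\Omega_{\alpha,M}$ is $\ell$-polynomially thin for some $\ell$. Fix $r>0$ and let $k$ be as in Lemma~\ref{lemmathin}. I split $\int_{\Omega_{\alpha,M}}|\Omega_{\alpha,M}\cap B((x,t),r)|^\ell\,d(x,t)$ along $\{|t|\le k\}$ and $\{|t|>k\}$. By~\eqref{cylinder}, $|x|\le c(\alpha,M)$ throughout $\Omega_{\alpha,M}$; hence the first region has finite Haar measure, the integrand is everywhere dominated by the volume of a ball of radius $r$, and this contribution is finite. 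On $\{|t|>k\}$, Lemma~\ref{lemmathin} bounds the integrand by $C|t|^{\ell n(2-\alpha)}$; integrating first in $x$ (over a fixed bounded set) and then in $t\in\R^m$ produces a convergent integral precisely when $\ell>m/(n(\alpha-2))$. Choosing any such $\ell$ verifies polynomial thinness, and Theorem~\ref{polythin} produces a self-adjoint extension with purely discrete spectrum. For uniqueness, the formulae~\eqref{nablawalpha}--\eqref{Lwalpha} show that $V_\alpha$ is continuous on $G$ (and vanishes at the identity) when $\alpha>2$, so $V_\alpha\in L^2_\loc$; combined with its lower bound and Theorem~\ref{thmselfadj}, $(\mathcal{L}+V_\alpha, C_c^\infty)$ is essentially self-adjoint, which gives uniqueness.

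\textbf{Case $0<\alpha\le 2$.} I use the well-known equivalence between purely discrete spectrum and compactness of the resolvent for self-adjoint operators on separable Hilbert spaces. Fix $\phi\in C_c^\infty(G^*)$ with $\|\phi\|_{L^2}=1$ and compact support $K\Subset G^*$, and set $\phi_k(y)=\phi(g_k^{-1}y)$ for $g_k=(0,t_k)$, with $|t_k|\to\infty$ spread out enough that the translates $g_k\cdot K$ are pairwise disjoint and uniformly bounded away from the identity. Because the horizontal component of $g_k$ is zero, the group law collapses to $g_k\cdot(x,t)=(x,t+t_k)$, so $\phi_k\in C_c^\infty(G^*)$, the $\phi_k$ are orthonormal, and $\phi_k\rightharpoonup 0$ in $L^2$. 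Left-invariance of $\mathcal{L}$ yields $\|\mathcal{L}\phi_k\|_{L^2}=\|\mathcal{L}\phi\|_{L^2}$. For $(x,t)\in K$, $|x|$ stays bounded while $N(g_k\cdot(x,t))\to\infty$ uniformly, and the upper bound of Lemma~\ref{casononcpt} gives $V_\alpha(g_k\cdot(x,t))\lesssim N^{2\alpha-4}|x|^2$, which is uniformly bounded in $k$ precisely because $\alpha\le 2$ (and tends to $0$ when $\alpha<2$). Consequently $\|(\mathcal{L}+V_\alpha)\phi_k\|_{L^2}$ is uniformly bounded. For any self-adjoint extension $T$ of $(\mathcal{L}+V_\alpha, C_c^\infty(G^*))$ one has $T\phi_k=(\mathcal{L}+V_\alpha)\phi_k$; were $(T-i)^{-1}$ compact, writing $\phi_k=(T-i)^{-1}(T-i)\phi_k$ and applying compactness to the bounded sequence $(T-i)\phi_k$ would force a strongly convergent subsequence of $(\phi_k)$, contradicting $\phi_k\rightharpoonup 0$ with $\|\phi_k\|_{L^2}=1$.

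The main delicate point lies in the second case: one must guarantee both that the translates $g_k\cdot K$ stay inside $G^*$ (so $\phi_k\in C_c^\infty(G^*)$ and every self-adjoint extension $T$ genuinely acts on $\phi_k$ as $\mathcal{L}+V_\alpha$) and that the nonlinear group law does not enlarge $V_\alpha$ on these translates. Both issues dissolve once we choose $g_k=(0,t_k)$ in the centre, which makes the uniform bound on $V_\alpha$ a direct consequence of the growth of $N$ along the $t$-axis and the estimate in Lemma~\ref{casononcpt}.
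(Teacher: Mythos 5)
Your proposal is correct and follows essentially the same route as the paper: for $\alpha>2$ it verifies polynomial thinness of the sublevel sets via Lemma~\ref{lemmathin} and the splitting along $\{|t|\le k\}$, invoking Theorem~\ref{polythin} (with uniqueness coming from Theorem~\ref{thmselfadj}, which the paper records in the corollary preceding Remark~\ref{alphale2}); for $0<\alpha\le 2$ it disproves compactness of the resolvent using central translates of a fixed bump, on whose supports $|V_\alpha|$ stays uniformly bounded by Lemma~\ref{casononcpt}, exactly as in the paper's construction with $\psi_n=L_{(0,n)}\psi$.
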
 

\begin{proof}
	Suppose that $0<\alpha\le 2$, and take any self-adjoint extension $T_\alpha$ of $(\mathcal{L}+V_\alpha,C_c^\infty(G^*))$. Fix  any $\lambda$ in the resolvent set of $T_\alpha$. We shall prove that $(\lambda I + T_\alpha)^{-1}$ is non-compact.
	Consider a nonzero $C^\infty$ function $\psi \colon G \rightarrow \C$ supported in $B(e,1)$ and for each $n\in\N$ let $\psi_n$ be the left translate of  $\psi$ by $(0,n)$, i.e.
	\[
	\psi_n(x,t) \coloneqq L_{(0,n)}\psi(x,t) = \psi ((0,-n)(x,t))=\psi (x,t-n),
	\]
	Then $\Supp{\psi_n}\subseteq \mathrm{B}((0,n),1)$; hence $\psi_n\in C^\infty_c(G^*)\subseteq\Dom(T_\alpha)$ for every $n\geq 1$. Denote by $\mathcal{C}$ the set $\{(x,t)\colon \abs*{x}\leq 1, N(x,t)\geq 1\}$; then $C\coloneqq \norm{V_\alpha\chi_\mathcal{C}}_\infty$ is finite by Lemma~\ref{casononcpt}, and $\Supp{\psi_n}\subseteq \mathcal{C}$ for $n\geq 2$.
	Define $f_n \coloneqq (\lambda I + T_\alpha)\psi_n=(\lambda I+ \mathcal{L}+V_\alpha)\psi_n$ for $n\geq 2$. Then,
	\[
	\begin{split}
	\norm{f_n}_2=\norm{(\lambda I + \mathcal{L} + V_\alpha)\psi_n}_2 &\leq \abs{\lambda} \norm{\psi_n}_2 + \norm{\mathcal{L}\psi_n}_2 + \norm{V_\alpha\chi_\mathcal{C}}_\infty \norm{\psi_n}_2\\
	&\leq (\abs{\lambda}+C)\norm{\psi}_2+\norm{\mathcal{L}\psi}_2,
	\end{split}
	\] 
	since 
	\[
	\norm*{\mathcal{L}\psi_n}_2 = \norm*{\mathcal{L}L_{(0,n)}\psi}_2 = \norm*{L_{(0,n)}\mathcal{L}\psi}_2 =  \norm*{\mathcal{L}\psi}_2,
	\]
	by the left-invariance of $\mathcal{L}$. Thus $(f_n)$ is a bounded sequence in $L^2$ such that $(\lambda I + T_\alpha)^{-1} f_n=\psi_n$ does not admit any convergent subsequence, since
	\begin{equation*}
	\norm*{\psi_n - \psi_m}_2^2 = \norm*{\psi_n}_2^2 + \norm*{\psi_m}_2^2 = 2\norm*{\psi}_2^2. 
	\end{equation*}
	Hence the operator $(\lambda I +T_\alpha)^{-1}$ is non-compact, so that $T_\alpha$ cannot have purely discrete spectrum (cf.~\cite[Theorem 11.3.13]{Oliveira}). 
	
	\smallskip
	
	Let now $\alpha>2$. Since $V_\alpha$ is bounded from below, by Theorem \ref{polythin} it is enough to prove that for every $M$ the sublevel set $\Omega_{\alpha,M}$ is $\ell$-polynomially thin for a suitable $\ell >0$. Choose then $r>0$. First, for any $\ell>0$, write
	\[ \int_{\Omega_{\alpha,M}}\abs{\Omega_{\alpha,M} \cap B((x,t),r)}^\ell \,dt\,dx = \left(\int_{\Omega_{\alpha,M}^1} + \int_{\Omega_{\alpha,M}^2}\right)\abs{\Omega_{\alpha,M} \cap B((x,t),r)}^\ell\,dt\,dx\]
	where
	\[\Omega_{\alpha,M}^1 \coloneqq \Omega_{\alpha,M} \cap \{|t|\leq k\}, \qquad \Omega_{\alpha,M}^2 \coloneqq \Omega_{\alpha,M} \cap \{|t|> k\},\]
	and $k=k(\alpha,M,r)$ is that of Lemma \ref{lemmathin}.
	
	The integral over $\Omega_{\alpha,M}^1$ is finite for every $\ell$ since the integrand is bounded and $\Omega_{\alpha,M}^1$ has finite measure by \eqref{cylinder}. By Lemma \ref{lemmathin} and \eqref{cylinder} we get
	
	\[\int_{\Omega_{\alpha,M}^2 }\abs{\Omega_{\alpha,M} \cap B((x,t),r)}^\ell \leq C(\alpha,M,r) \int_{\{|x|\leq c\}} \int_{\{|t|>k\}}t^{\ell n\left(2-\alpha\right)}\,d t\,d x\]
	which is finite for $\ell > \frac{m}{n(\alpha -2)}$. This completes the proof.
\end{proof}
We can now prove Theorem~\ref{DSalpha}.

\begin{proof}[Proof of Theorem~\ref{DSalpha}]
	Let $\alpha \in (0,2]$ and let $(T_\alpha, \mathscr{D}_\alpha)$ be a self-adjoint extension of $\mathcal{L}^{w_\alpha}_0$. Then $(T_\alpha, \mathscr{D}_\alpha)$ is also a self-adjoint extension of $(\mathcal{L}_0^{w_\alpha}, C_c^\infty(G^*))$ and therefore $(U_\alpha T_\alpha U_\alpha^{-1}, 
U_\alpha\mathscr{D}_\alpha)$ is a self-adjoint extension of  $(\mathcal{L}+V_\alpha, C_c^\infty(G^*))$, since $U_\alpha \mathscr{D}_\alpha\supseteq C_c^\infty(G^*)$.  Since $U_\alpha T_\alpha U_\alpha^{-1}$ does not have purely discrete spectrum by Proposition~\ref{nonempty}, neither does $T_\alpha$.  
	
	\smallskip
	
	Let $\alpha \in \left( 2, \infty\right)$. By Lemma~\ref{casononcpt}, $V_\alpha$ is continuous and bounded from below. Therefore,  $\mathcal{L}^{w_\alpha}$ is unitarily equivalent to the closure of the operator $(\mathcal{L}+V_\alpha, C^\infty_c)$ by Proposition~\ref{uneq}. Since by Proposition~\ref{nonempty}  the closure of $(\mathcal{L}+V_\alpha, C^\infty_c)$ has purely discrete spectrum, the same holds for  $\mathcal{L}^{w_\alpha}$. 
\end{proof}

\begin{corollary}
	Let $\alpha>2$. Then, the semigroup $e^{-t\mathcal{L}^{w_\alpha}}$ is compact on $L^p(w_\alpha)$ for all $t>0$ and $1 < p < \infty$, and the spectrum of $\mathcal{L}^{w_\alpha}$ on $L^p(w_\alpha)$ is independent of $p$ for $1 < p < \infty$.
\end{corollary}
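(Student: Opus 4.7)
The plan is to reduce both claims to the $L^2(w_\alpha)$-discreteness of $\sigma(\mathcal{L}^{w_\alpha})$ proved in Theorem~\ref{DSalpha}, combined with kernel estimates borrowed from the Schr\"odinger representation of Proposition~\ref{uneq} and standard Markovian semigroup theory.

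First I would verify that the Dirichlet form $\mathfrak{t}$ is Markovian (unit contractions decrease $|\nabla_\Hc \phi|$ pointwise), which by the Beurling-Deny criteria (cf.~\cite[Chapter 2]{Ouhabaz}) gives that $P_t\coloneqq e^{-t\mathcal{L}^{w_\alpha}}$ extends to a consistent contraction semigroup on every $L^p(w_\alpha)$, $1\leq p\leq\infty$. For $\alpha>2$, Lemma~\ref{casononcpt} gives $V_\alpha\geq -C$, so the Feynman-Kac formula (or Trotter's product formula together with positivity of the sub-Laplacian heat kernel) yields
\[0\leq k_t(x,y)\leq e^{Ct}\,h_t(x,y),\]
where $k_t$ is the kernel of $e^{-t(\mathcal{L}+V_\alpha)}$ and $h_t$ is the heat kernel of $\mathcal{L}$, which obeys the standard Gaussian upper bounds on stratified groups. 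Transporting via the isometry $U_\alpha$, the kernel of $P_t$ with respect to $w_\alpha(y)\,\dd y$ is $p_t(x,y)=w_\alpha(x)^{-1/2}k_t(x,y)w_\alpha(y)^{-1/2}$.

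Compactness of $P_t$ on $L^p(w_\alpha)$ for $1<p<\infty$ will then follow by establishing a hypercontractive estimate $\|P_tf\|_{L^q(w_\alpha)}\leq C_t\|f\|_{L^p(w_\alpha)}$ for some pair $1<p<q<\infty$, via a Schur-type test on $p_t$ using the Gaussian decay in $h_t$ and the integrability of $w_\alpha$, and then writing $P_t=P_{t/3}\circ P_{t/3}\circ P_{t/3}$: the outer factors send $L^p(w_\alpha)$ into $L^2(w_\alpha)$ and back (by hypercontractivity, duality, and $L^\infty$-contractivity), while the middle factor is compact on $L^2(w_\alpha)$ by Theorem~\ref{DSalpha}. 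For the $p$-independence of the spectrum, each $L^2(w_\alpha)$-eigenfunction $\phi$ of $\mathcal{L}^{w_\alpha}$ satisfies $\phi=e^{\lambda}P_1\phi$, and the same kernel bound places $\phi$ in every $L^p(w_\alpha)$; hence the pure point spectra of $\mathcal{L}^{w_\alpha}$ on the various $L^p(w_\alpha)$ coincide with the $L^2$-spectrum, and since each of them is already discrete by the previous step, they are all equal.

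The main technical obstacle is the hypercontractive estimate itself: the unitary $U_\alpha$ identifies only the $L^2$ spaces, so the ultracontractivity of $e^{-t\mathcal{L}}$ on $L^p(G)$ does not transfer mechanically to $P_t$ on $L^p(w_\alpha)$. One must balance the Gaussian decay $e^{-cd(x,y)^2/t}$ in $h_t$ against the exponential blow-up of $w_\alpha^{-1/2}=e^{N^\alpha/2}$ in $p_t$; the factor $e^{Ct}$ from Feynman-Kac is harmless, but the balance is delicate in regions where $|x|$ is small while $N(x,t)$ is large (precisely where the potential $V_\alpha$ fails to be confining), and is likely to require either the specific geometry of $N$ on M\'etivier groups or a super-Poincar\'e inequality in the spirit of~\cite{Inglis}.
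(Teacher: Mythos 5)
Your argument has a genuine gap, and it sits exactly where you flag it: the hypercontractive estimate $\|P_tf\|_{L^q(w_\alpha)}\leq C_t\|f\|_{L^p(w_\alpha)}$ for some $1<p<q$ is never proved, and your whole factorization $P_t=P_{t/3}\circ P_{t/3}\circ P_{t/3}$ collapses without it, since the outer factors are only known to be contractions on each fixed $L^p(w_\alpha)$, not smoothing maps $L^p(w_\alpha)\to L^2(w_\alpha)$. The Feynman--Kac bound $0\leq k_t\leq e^{Ct}h_t$ cannot rescue this: it uses only $V_\alpha\geq -C$ and discards the confining growth of $V_\alpha$, so after conjugation by $U_\alpha$ the resulting bound on $p_t(x,y)=w_\alpha(x)^{-1/2}k_t(x,y)w_\alpha(y)^{-1/2}$ carries the factor $e^{(N(x)^\alpha+N(y)^\alpha)/2}$ against a Gaussian in $d(x,y)$ only, which controls nothing off the diagonal at infinity. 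You correctly sense that making this work would require genuinely new input (a super-Poincar\'e inequality or fine geometry of $N$); the point is that none of it is needed.

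The missing idea is that \emph{compactness interpolates}. Your first step is the right one and is all that is required: the form is Markovian, so $(P_t)$ is a consistent, symmetric, positivity-preserving contraction semigroup on every $L^p(w_\alpha)$, $1\leq p\leq\infty$. Combined with the compactness of $P_t$ on $L^2(w_\alpha)$ from Theorem~\ref{DSalpha}, the general theory of symmetric Markov semigroups (\cite[Theorem 1.6.3]{Davies1}, whose proof rests on the interpolation theorem for compact operators between the $L^1$/$L^\infty$ endpoints and the compact $L^2$ midpoint) yields directly that $P_t$ is compact on $L^p(w_\alpha)$ for all $1<p<\infty$ and that the spectrum of the generator is $p$-independent. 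No gain in the integrability exponent, no kernel bound, and no eigenfunction bootstrap are needed; your eigenfunction argument for the $p$-independence is also subsumed by the same citation. Replace everything after your Beurling--Deny step with this reference and the proof is complete.
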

\begin{proof}
	The semigroup $(e^{-t\mathcal{L}^{w_\alpha}})_{t>0}$ is  Markovian, since it is associated with a Dirichlet form. Thus it is contractive on $L^\infty$ and positivity-preserving. Since the semigroup is symmetric and compact on $L^2(w)$ by  Theorem~\ref{DSalpha}, this proves the statement by~\cite[Theorem 1.6.3]{Davies1}.
\end{proof}

\smallskip

\subsection*{Acknowledgements} We are pleased to thank Giancarlo Mauceri for enlightening discussions, invaluable suggestions and a careful proofreading of a preliminary version of this paper.

\end{document}